\documentclass[12pt]{article}
\usepackage{}
\usepackage{multirow}
\usepackage{color}
\usepackage{diagbox}
\usepackage{graphicx}
\usepackage{epsfig}
\usepackage{graphicx}
\usepackage[tbtags]{amsmath}
\usepackage{epsfig}
\usepackage{epstopdf}
\usepackage{amssymb}
\usepackage{bm}

\setlength{\oddsidemargin}{0in} \setlength{\evensidemargin}{0in}
\setlength{\textheight}{9.0in} \setlength{\textwidth}{6.5in}
\setlength{\topmargin}{-0.5in}
\usepackage{amsmath}
\usepackage{amsthm}
\usepackage{cases}
\usepackage{amsfonts}
\usepackage{mathrsfs}
\usepackage{multirow}
\usepackage{tabularx}
\usepackage{color}
\usepackage{algorithm}
 \usepackage{pgfplots}
\usepackage{grffile}
\usepackage{amsmath}
\newtheorem{thm}{Theorem}[section]
\newtheorem{lem}{Lemma}[section]
\newtheorem{rem}{Remark}[section]
\newtheorem{defn}{Definition}[section]
\newtheorem{pro}{Proposition}[section]

\newtheorem{exm}{Example}[section]
\newtheorem{alg}{Algorithm}[section]
\newtheorem{as}{Assumption}[section]

\numberwithin{equation}{section} \allowdisplaybreaks[4]

\begin{document}
\date{}
\pagestyle{plain}
\title {Reach-avoid semi-Markov decision processes with time-varying obstacles \thanks{Research supported by NSFC (Grant No. 11931018).}}
\author{Yanyun Li and Xianping Guo \thanks{Corresponding author.
Email: mcsgxp@mail.sysu.edu.cn (X.P. Guo).}\\
  \small{School of Mathematics,} \\ \small{Sun Yat-Sen University, Guangzhou, 510275, China}}

\date{}
\maketitle \underline{}
\vskip 0.2 in \noindent
{\bf Abstract:}
We consider the maximal reach-avoid probability to a target in finite horizon for semi-Markov decision processes with time-varying obstacles. Since the variance of the obstacle set, the model \eqref{Model} is non-homogeneous. To overcome such difficulty, we construct a related two-dimensional model \eqref{newModel}, and then prove the equivalence between such reach-avoid probability of the original model and that of the related two-dimensional one. For the related two-dimensional model, we analyze some special characteristics of the equivalent reach-avoid probability. On this basis, we provide a special improved value-type algorithm to obtain the equivalent maximal reach-avoid probability and its $\epsilon$-optimal policy. Then, at the last step of the algorithm, by the equivalence between these two models, we obtain the original maximal reach-avoid probability and its $\epsilon$-optimal policy for the original model.

\vskip 0.2 in \noindent{\bf Key Words:}
Finite horizon semi-Markov decision processes; time-varying obstacles; non-homogeneous; maximal reach-avoid probability;  $\epsilon$-optimal policy.

\vskip 0.2 in \noindent {\bf Mathematics Subject Classification.}
91A15, 91A25

\setlength{\baselineskip}{0.25in}

\section{Introduction} \label{intro}

Safety and reachability are two of the most fundamental aspects in controlled dynamical systems, which can be modeled by using the framework of
Markov decision processes (MDPs), see \cite{DM22,DEJ11,LL-23,LGG-2023,MZ23}. One of the main objectives in reachability problems for
MDPs, is to maximize the probability of reaching a target set within a given time-horizon from regular states, usually called a reach-avoid probability.  The reach-avoid problem in discrete-time and continuous-time MDPs had been analyzed in \cite{DM22,DEJ11,LL-23,LGG-2023}. Note that the sojourn time at each state in the model analyzed in \cite{LL-23} is exponential distributed, it is natural to consider the reach-avoid problem in the semi-MDPs where the sojourn time is general distributed.

Regarding the reach-avoid problem, the main research objects are the maximal probabilistic reachable set (i.e., a set of states from which the evolution of the system has a reach-avoid probability), a ``yes" or ``no" problem (i.e., whether it is possible to reach the target set in a given time starting from a certain set) and the maximal reach-avoid probability.
For the first one, a method for computing maximal probabilistic reachable set in nondeterministic systems, was presented in \cite{LLWY-22}. For the second one, various methods have been proposed to deal with the ``yes" or ``no" problem, including the ellipsoidal method \cite{L-20}, the polyhedral method \cite{A-03}, and the level set method \cite{J-04}. For the third one, many researchers have studied the problem of calculating the maximal reach-avoid probability in MDPs, see \cite{DM22,DEJ11,LL-23,LGG-2023}.
Different from above, our research is aim to find out the maximal reach-avoid probability in semi-MDPs.
Actually, the reach-avoid probability can be regarded as the probability of an airplane reaching the target location in a safe flying space.

In MDPs, most researchers considered the risk neutral criteria (see \cite{A02,OL96}), risk probability criterion (see \cite{BM04,HG-2020}) and risk-sensitive criterion (see \cite{BR14,CH11}). For the problem of computing
the maximal reach-avoid probability in MDPs, one can refer to~\cite{DM22,DEJ11,LL-23,LGG-2023}. In detail, the existence of an optimal policy of such problem in discrete-time MDPs had been proved in \cite{DEJ11}; the transformation from the reach-avoid probability into an equivalent long-run average reward in discrete-time MDPs, had been given in \cite{DM22}; A novel state-classification-based PI approach of computing the maximal reach-avoid probability in discrete-time MDPs, had been presented in \cite{LGG-2023}, which solved the non-uniqueness problem of its solution to the original optimality equation; in continuous-time MDPs, \cite{LL-23} found that the maximal reach-avoid probability can be dealt with under the embedded Markov chains that can be regarded as a special discrete-time MDP in the finite state space case (see \cite{LGG-2023}), and in a controlled branching process (i.e., a special MDP), obtained an algorithm of computing minimal extinction probability (i.e., minimal reach-avoid probability with the target set being a single point set $\{0\}$). However,
the problem of computing the maximal reach-avoid probability mentioned above
is defined by a fixed obstacle set.

In this paper, we continue this line of research by studying the maximal reach-avoid probability with time-varying obstacles in semi-MDPs.
The main contributions of this study are as follows:
\begin{enumerate}
\item Different from \cite{DM22,DEJ11,LL-23,LGG-2023}, since there are time-varying obstacles in semi-MDPs, we can not determine which situation of transformation occurs at every step under the stochastic kernel $Q$. To overcome this difficulty, we introduce a transferred method that is similar with the method of enlarging its state space mentioned in \cite{NU-17}, and then show that the reach-avoid probability in the original model \eqref{Model} is equivalent to the corresponding reach-avoid probability in the equivalent semi-Markov model \eqref{newModel}, see Theorem \ref{thm3.1}. The main advantage of such transferred method is that one can deal with the problem caused by the time-varying obstacles, and transfer the non-homogeneous model \eqref{Model} into the homogeneous model \eqref{newModel}.

\item  We present an algorithm of calculating the maximal reach-avoid probability and its $\epsilon$-optimal policy of the original model \eqref{Model}.  More precisely, the equivalent maximal reach-avoid probability and its equivalent $\epsilon$-optimal policy
	is provided in Algorithm \ref{al-finite}, and then, by the transferred result (Theorem \ref{thm3.1}) and Lemma \ref{lem3.1}, the maximal reach-avoid probability and its $\epsilon$-optimal policy in the original model \eqref{Model} can be transferred from \eqref{newModel}, see Step 4 in Algorithm \ref{al-finite}. Especially, as one can see in Steps 1-3 of Algorithm \ref{al-finite}, the transition steps of the original model \eqref{Model} are supplemented to the state of the equivalent model \eqref{newModel} by $\tilde{Q}$, which overcomes the non-homogeneity caused by the varying obstacle set. In Steps 1-3, we only need to calculate one value function beginning with $k$'th decision epoch at some iteration, and with each additional iteration, we obtain the corresponding value function starting the previous decision epoch. Finally, in Step 4, we obtain the final optimal value function starting the first decision epoch, which is the maximal reach-avoid probability in model \eqref{newModel}.
	
	\item In addition, we analyze several special properties of the equivalent model \eqref{newModel} in Theorem \ref{thm useful}, whose function of the varying-time obstacle set is presented in Remark \ref{rem-2}. Moreover, via an example given in Section \ref{sec-5}, we find a special law of the varying-time obstacle set given in Remark \ref{rem-3}.
\end{enumerate}

This paper unfold as follows: In Section \ref{sec-2}, we briefly introduce the reach-avoid problem in semi-MDPs. Section \ref{sec-3} contains the transferred method of transferring the non-homogeneous model to the homogeneous model. The special properties of the equivalent model, the uniqueness of the solution to optimality equation, the existence of an optimal policy and an algorithm of the maximal reach-avoid probability and its $\epsilon$-optimal policy, are provided in Section \ref{sec-4}. Finally, an example about the filght of the plane is presented in Section \ref{sec-5}.

\section{Description of reach-avoid problems in semi-MDPs}\label{sec-2}

The reach-avoid problem under semi-Markov decision processes in a finite horizon  $\mathbb{R}_T:=[0,T]$ with $T < \infty$ considered in this paper, is formulated by
\begin{equation}\label{Model}
	\{E, (B_n:n\geq 0), C, (A(x)\subset A: x \in E), Q(\cdot,\cdot| x, a)\},
\end{equation}
where the five elements are explained as below:

(1)\ $E$ is a Borel state space, that is, a Borel subset of a complete and separable metric space, denoting the set of all observable states of a system, with the Borel $\sigma$-algebra $\mathcal{B}(E)$.
	
(2)\  $B_n\! \in\! \mathcal{B}(E)\ (n\geq 0)$ and $C\! \in\! \mathcal{B}(E)$, satisfy that $B_n\! \cap\! C=\emptyset$ and $E\! \setminus \!(B_n\! \cup\! C)\neq \emptyset$. Note that $B_n$ can be regarded as a cemetery set at the $n$'th step, and $C$ as a fixed target set. For example, a plane flies to a target place and it will meet different obstacles during its flight route, see examples in \cite{IAC-05,KJ-11}.

(3)\  $A(x)$ is a finite set of actions admissible at state $x\! \in\! E$ and $A=\bigcup\limits_{x\in E}A(x)$.

(4)\ $Q(\cdot,\cdot| x, a)\ (x\!\in\! E, a\!\in\! A(x))$, is the one-step transition mechanism of the system. By letting $K:=\{(x,a)| x\! \in\! E,\ a\! \in\! A(x)\}$ be the set of all feasible state-action triple, $Q(\cdot,\cdot| x, a)$ is defined by the semi-Markov kernel $Q$ on $E\times\mathbb{R}_T$, satisfying that: (i)\ for any fixed $D\! \in\! \mathcal{B}(E)$ and $(x,a)\! \in\! K$, $Q(D,\cdot| x, a)$ is a nondecreasing and right-continuous real-valued function with $Q(D, 0| x, a)=\delta_{D}(x)$; (ii)\ for each fixed $t$, $Q(\cdot,t| \cdot, \cdot)$ is a sub-stochastic kernel on $E$ given $K$; and (iii)\ $Q(\cdot, \infty| \cdot, \cdot):=\lim\limits_{t \rightarrow \infty}Q(\cdot,t| \cdot, \cdot)$ is a stochastic kernel on $E$ given $K$. For a fixed pair $(x,a)\! \in\! K$, $Q(\cdot,\cdot| x,a)$ is the joint probability distribution of the sojourn time at state $x$ and the next state.

We now describe the evolution of the finite horizon semi-MDP. Assume that the initial state is $x_0\!\in\! E$ and initial decision epoch is $0$. The decision-maker chooses an action $a_0\!\in\! A(x_0)$. Under action $a_0$, the process remains at state $x_0$ for a random time $s_0$ and then transfers to state $x_1$ according to the transition kernel $Q(\cdot,\cdot |x_0,a_0)$. Then the decision-maker chooses an action $a_1\! \in\! A(x_1)$ and the process transfers into another state $x_2$ after the sojourn time $s_1$ according to the transition kernel $Q(\cdot, \cdot| x_1, a_1)$. At the decision epoch $s_0+\cdots +s_{n-1}$, the decision-maker chooses an action $a_n\! \in\! A(x_n)$. Then, the process stays at state $x_n$ for a random time $s_{n}$ and transfers to state $x_{n+1}$ according to the transition kernel $Q(\cdot,\cdot|x_n,a_n)$.
The process evolves in this way and thus we obtain an admissible history $h_n$ of the semi-MDPs up to
the $n$'th decision epoch, i.e.,
\begin{eqnarray*}
	h_n:=(x_0, a_0, s_0, x_1, a_1, s_1, \cdots, x_{n-1}, a_{n-1}, s_{n-1}, x_n).
\end{eqnarray*}
Denote $H_n$ as  the set of all admissible histories $h_n$ of the process up to the $n$'th decision epoch, where
$H_n$ is endowed with the Borel $\sigma$-algebra.

In many real situations, the time-varying obstacles are objectively existent, and such MDPs with time-varying obstacles can be applied in plane flight system and intelligent traffic system, see \cite{IAC-05,LLWY-22}. Below we give two examples to illustrate the advantage of time-varying obstacles.
\begin{description}
\item[(i)] {\bf Plane flight system:} In plane flight system, the set of time-varying obstacles usually includes ground obstacles (such as buildings, vehicles, etc.), aerial obstacles (such as other aircraft, flocks of birds, etc.), meteorological phenomena (such as turbulence, freezing, wind shear, etc.), and no-fly zones. The existence of these obstacles on the flying route, poses a serious challenge to flight safety and efficiency. So, effective decision-making and planning methods are needed to avoid collisions and ensure safety. Using the MDP model, the controller designs intelligent control policies to avoid collisions. For example, a reward function is defined to penalize collision events while rewarding safe paths.
		
\item[(ii)] {\bf Intelligent traffic system:} In urban traffic, traffic accidents and road construction will lead to temporary closure or restriction of some road sections, forming a changing obstacle area. Based on the MDP with a changing barrier set, the traffic management system can use these affected road sections as a changing barrier set based on real-time traffic condition information, and optimize decision-making such as traffic light duration and vehicle scheduling to improve overall traffic efficiency.
	\end{description}

\begin{exm}\label{exm2.1}
	\rm{ Consider a plane flight traffic system.
A vehicle treated as a mass point, moves with a constant linear speed $v$ on $E$, where the state space is $E:=\{0,1,2,\cdots,m\}$.
Suppose that when the vehicle at the state $i$, the pilot of the vehicle can control its direction by using control stick and pedal, and will choose different actions from  $A(i):=\{\alpha, \beta, \gamma\}$ for all $i \in E \setminus C$ to control the stick and pedal. The vehicle flies to the next state $j \in E \setminus C$ according to the transition kernel with regard to the action selected by the pilot and the current state $i$.
In the flying route of the plane, there are different obstacle sets at different decision epochs. These obstacle sets can be regarded as the birds, cumulonimbus, other planes, drones and high-rise buildings, iron towers, wind turbines, etc. The vehicle is aim to arrive at a destination, that is, a target set $C$. The purpose of the pilot of the vehicle is to avoid the obstacles before reaching the target set $C$.
}
\end{exm}

For convenience of our discussion, we give the concept of policies (decision rules) for the decision-maker to select actions.

\begin{defn}
\rm{
A randomized history-dependent policy is a sequence $\pi=\{\pi_n\!: n \geq 0\}$ of stochastic kernels $\pi_n$ on $A$ given $H_n$ satisfying
\begin{eqnarray*}
	\pi_n(A(x_n)| h_n)=1\ \ \forall \ h_n\! \in\! H_n,\ n  \geq 0.
\end{eqnarray*}
}
\end{defn}	
The set of all randomized history-dependent policies is denoted by $\Pi$.
\begin{defn}
\begin{description}	
\rm{
\item[(i)] A policy $\pi=\{\pi_n\!:n \geq 0\}\in\! \Pi$ is said to be randomized Markov if there is a sequence $\{\psi_n\!:n\geq 0\}$ of stochastic kernels on $A$ given $E$ such that $\psi_n(A(x)|x)=1$ for all
$x\! \in\! E$ and $\pi_n(\cdot|h_n)=\psi_n(\cdot|x_n)$ for every $h_n\! \in\! H_n$ and $n \geq 0$. In this case, the policy $\pi=\{\pi_n:n\geq 0\}$ is rewritten as $\pi=\{\psi_n:n\geq 0\}$.

\item[(ii)]
A randomized Markov policy $\pi=\{\psi_n:n\geq 0\} $ is called randomized stationary Markov if  $\psi_n=\psi$ for all $n \geq 0$. In this case, the policy $\pi=\{\psi,\psi,\cdots\}$ is abbreviated as $\psi$.

\item[(iii)]  A randomized Markov policy $\pi=\{\psi_n: n \geq 0\}$ is called deterministic Markov policy if there exists a sequence of decision functions $\{f_n: n \geq 0\}$ such that $\psi_n(\cdot|x)=\delta_{f_n(x)}(\cdot)$. In this case, the policy $\pi=\{\psi_n: n \geq 0\}$ is denoted as $\pi=\{f_n: n \geq 0\}$.

 A deterministic Markov policy $\pi=\{f_n: n \geq 0\}$ is called stationary deterministic  Markov policy, if there exists a decision function $f$ such that $f_n=f\ (n \geq 0)$. In this case, the policy $\pi=\{f,f,\cdots\}$ is abbreviated by $f$.
	}
	\end{description}
\end{defn}
For convenience, let $\Pi_{rm}$, $\Pi_s$, $\Pi_d$ and $\Pi_{sd}$ denote the set of all randomized Markov policies, the set of all randomized stationary Markov policies, the set of all deterministic Markov policies and the set of all deterministic stationary Markov policies, respectively. Clearly, $\Pi_{sd}  \subset \Pi_s\ (\Pi_d) \subset \Pi_{rm} \subset \Pi$.

Let $(\Omega, \mathcal{F})$ be the measurable space, where
\begin{eqnarray*}
	\Omega=\{(x_0,a_0,s_0,\ldots,x_n,a_n,s_n,\ldots)|\ ( x_n, a_n, s_n)\!\in\!  E\! \times\! A\! \times\! \mathbb{R}_T\ \text{for}\ n\geq 0\},
\end{eqnarray*}
and $\mathcal{F}$ is the corresponding Borel $\sigma$-algebra. Then, we define maps $Z_n$, $A_n$ and $\sigma_n$\ $(n \geq 0)$ on $(\Omega, \mathcal{F})$ as follows: for each $\omega:=(x_0,a_0,s_0,\ldots,x_n,a_n,s_n,\ldots) \in \Omega$,
\begin{eqnarray*}
	\sigma_0(\omega)=0, \ \sigma_n(\omega)=s_0+\cdots+s_{n-1},\ Z_n(\omega)=x_n,\ A_n(\omega)=a_n,
\end{eqnarray*}
where $\sigma_n$ is the $n$'th decision epoch, $Z_n$ and $A_n$ are the state and action chosen at the $n$'th decision epoch, respectively.
Therefore, by the well-known Ioneasu Tulcea theorem \cite{OL96}, for  each $x\! \in\! E$ and $\pi\! \in\! \Pi$, there exists a unique probability measure $P^{\pi}_{x}$ such that, for every $t\! \in\! \mathbb{R}_T$, $D\! \subset\! \mathcal{B}(E)$, $a\! \in\! A$ and $n \geq 0$,
\begin{eqnarray}\label{P-3}
	&&P^{\pi}_{x}(\sigma_0=0,Z_0=x)=1,
	\quad P^{\pi}_{x}(A_{n+1}=a|\ h_n)=\pi_n(a|\ h_n)\\
	&&P^{\pi}_{x}(Z_{n+1}\in D,\sigma_{n+1}-\sigma_{n}\leq t|\ h_n, a_n)=Q(D,t|x_n, a_n).
\end{eqnarray}
Denote $E^{\pi}_{x}$ as the expectation operator associated with $P^{\pi}_{x}$.
To avoid possibility of infinitely decision epochs during a finite horizon $\mathbb{R}_T$, we impose the following basic assumption.

\begin{as}\label{As-1}
\rm{	
	$P^{\pi}_x(\lim\limits_{n \rightarrow\infty}\sigma_n=\infty)=1$ for all $x\!\in\! E$ and $\pi\! \in\! \Pi$.
}
\end{as}
The above assumption is same as Assumption 2.1 in \cite{HGS-11}. Moreover, it is trivially fulfilled in discrete-time MDPs. We suppose that
Assumption \ref{As-1} holds throughout this paper. Although Assumption \ref{As-1} is natural and mild, it is not easy to verify in applications. The following Proposition~\ref{As-2} gives a sufficient condition for Assumption~\ref{As-1} and one can refer Proposition 2.1 in \cite{HG11,HGS-11} for its proof.

\begin{pro}\label{As-2}
	\rm{ Suppose that there exist positive constants $\delta$ and $\epsilon_0$ such that
\begin{eqnarray}\label{As-2eq}
Q(E,\delta|\ x,a) \leq 1-\epsilon_0\quad \text{for\ all} \ x\! \in\! E\! \setminus\! B_0 \ \text{and}\ a\! \in\! A(x).
\end{eqnarray}
Then Assumption~\ref{As-1} holds.}
\end{pro}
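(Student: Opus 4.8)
The plan is to run the standard non-explosion argument for semi-MDPs, controlling the Laplace transform of the decision epochs; this is essentially the argument of Proposition~2.1 in \cite{HG11,HGS-11}, to which the paper refers for the proof. Since $(\sigma_n)_{n\ge 0}$ is nondecreasing in $n$, the limit $\sigma_\infty:=\lim_{n\to\infty}\sigma_n$ exists in $[0,\infty]$, and it suffices to prove $E^{\pi}_{x}\big[e^{-\sigma_\infty}\big]=0$ for every $x\in E$ and $\pi\in\Pi$, which is equivalent to $P^{\pi}_{x}(\sigma_\infty=\infty)=1$, i.e.\ to Assumption~\ref{As-1}.

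First I would set $\mathcal{G}_n:=\sigma(h_n,a_n)$, the $\sigma$-algebra generated by the history through the $n$th action; then $\sigma_n$ is $\mathcal{G}_n$-measurable, and by \eqref{P-3} the conditional law of the sojourn time $s_n:=\sigma_{n+1}-\sigma_n$ given $\mathcal{G}_n$ is $Q(\cdot\,| Z_n,A_n)$. Using $e^{-s_n}\le 1$ together with $e^{-s_n}\le e^{-\delta}$ on $\{s_n>\delta\}$ and the hypothesis \eqref{As-2eq}, on the event $\{Z_n\in E\setminus B_0\}$ one obtains
\[
E^{\pi}_{x}\big[e^{-s_n}\mid\mathcal{G}_n\big]\ \le\ Q(E,\delta| Z_n,A_n)+e^{-\delta}\big(1-Q(E,\delta| Z_n,A_n)\big)\ \le\ 1-\epsilon_0\big(1-e^{-\delta}\big)\ =:\ \rho\in(0,1).
\]
Since $e^{-\sigma_{n+1}}=e^{-\sigma_n}e^{-s_n}$ with $e^{-\sigma_n}$ being $\mathcal{G}_n$-measurable, successive conditioning then yields $E^{\pi}_{x}\big[e^{-\sigma_n}\big]\le\rho^{\,n}$ (modulo excursions of the process into $B_0$, which I address below), and applying bounded convergence to $e^{-\sigma_n}\downarrow e^{-\sigma_\infty}$ gives $E^{\pi}_{x}\big[e^{-\sigma_\infty}\big]=\lim_{n\to\infty}E^{\pi}_{x}\big[e^{-\sigma_n}\big]=0$, which is the desired conclusion.

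The only delicate point is that \eqref{As-2eq} is imposed only on $E\setminus B_0$, so the contraction bound on $E^{\pi}_{x}\big[e^{-s_n}\mid\mathcal{G}_n\big]$ is available only while $Z_n\notin B_0$; the estimate $E^{\pi}_{x}\big[e^{-\sigma_n}\big]\le\rho^{\,n}$ must therefore be supplemented by the observation that excursions of the process into the cemetery set $B_0$ cannot by themselves force a bounded accumulation of decision epochs (either because $B_0$ is absorbing with non-vanishing sojourn times, or because the post-absorption evolution is immaterial to the reach--avoid probability). Under the stronger uniform hypothesis $Q(E,\delta| x,a)\le 1-\epsilon_0$ on all of $K$, as stated in \cite{HG11,HGS-11}, this case distinction disappears and the telescoping estimate holds verbatim. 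I expect this bookkeeping around $B_0$, and the routine measurability checks that legitimise the successive conditioning, to be the only mildly technical ingredients; the substance of the proof is the one-line bound on $E^{\pi}_{x}\big[e^{-s_n}\mid\mathcal{G}_n\big]$. (A conditional Borel--Cantelli lemma gives an alternative derivation, since \eqref{As-2eq} makes $\sum_{n}P^{\pi}_{x}\big(s_n>\delta\mid\mathcal{G}_n\big)$ diverge and hence $s_n>\delta$ for infinitely many $n$, whence $\sigma_\infty=\sum_{n}s_n=\infty$.)
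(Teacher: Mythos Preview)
The paper does not actually prove this proposition; it simply refers the reader to Proposition~2.1 of \cite{HG11,HGS-11}. Your Laplace-transform/successive-conditioning argument is exactly the standard non-explosion proof carried out in those references, and your flagging of the $E\setminus B_0$ restriction is apt (the cited sources state the hypothesis uniformly on $K$, so the bookkeeping you describe is the only addition needed here).
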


Under Assumption \ref{As-1}, we can define an underlying continuous-time state-action process $\{(X_t, \mathfrak{a}_t): t\!\in\! \mathbb{R}_T\}$ by
\begin{eqnarray*}
	X_t=Z_n,\ \ \mathfrak{a}_t=A_n,\ \text{for}\ t\in[\sigma_n,\sigma_{n+1}),\ \ n \geq 0,
\end{eqnarray*}
which is called a finite horizon semi-MDP. It is well-known that semi-MDPs can describe a great variety of real-world situations such as queuing systems and maintenance problems \cite{CO-10,LZ-00,SVA-07}.

To state our reach-avoid problem, let
\begin{eqnarray}\label{def-tau}
	\begin{cases}
	\tau_{_C}:=\inf \{t \geq 0:\ X_t\! \in\! C\}=\inf\{n\geq 0:\ Z_n\! \in\! C\}\ \ (\inf\emptyset=\infty)\\
	\bar{\tau}:=\inf \{\sigma_n \geq 0:\ X_{\sigma_n}\! \in\! B_n\}
\end{cases}
\end{eqnarray}
be  first hitting time on $C$
and first time such that $X_{\sigma_n}\! \in\! B_n$, respectively. In the following, $\bar{\tau}$ is called the cemetery-hitting time.

For a given policy $\pi\! \in\! \Pi$ and an initial state $x$, the probability of reaching $C$ before cemetery-hitting during a finite period time $[0,t]$ for each $t\! \in\! \mathbb{R}_T$, is defined by
\begin{eqnarray}\label{T-criterion}
	G(x,t,\pi):=P^{\pi}_x(\tau_C<\bar{\tau} \wedge t)\quad \text{for\ any}\ (x,t)\! \in\! E\! \times\! \mathbb{R}_T,
\end{eqnarray}
which is usually called the reach-avoid probability (see \cite{DM22,DEJ11}).

\begin{defn}\label{C-def}
		\rm{The set $D \subset E$ is a uniformly-absorbing set if for any $x \in D$ and $a \in A(x)$, $Q(D, \infty| x, a)=1$.
}
\end{defn}
Since $G(x,t,\pi)$ only depends on the evolution of the process before hitting $C$ and the set $C$ is the target set, it is natural to assume that $C$ is a uniformly-absorbing set from now on. It is obvious that $G(x,t, \pi) \equiv 0\ (x \!\in\! B_0)$ and $G(x,t,\pi) \equiv 1\ (x\in C)$, we only need to consider the initial state $x \in E\! \setminus\! (B_0 \cup C)$. Then, define the maximal reach-avoid probability as below: for each  $x\!\in\! E \setminus (B_0 \cup C)$,
\begin{equation}\label{G^*}
	G^*(x,t):=\sup_{\pi \in \Pi}G(x,t,\pi)=\sup_{\pi \in \Pi}P^{\pi}_x(\tau_C<\bar{\tau} \wedge t)\quad  \text{for\ any }\ t\! \in\! \mathbb{R}_T.
\end{equation}

\begin{defn}\label{O-def}
{\rm	\begin{description}
		\item[(i)] A policy $\pi^*\in \Pi$ is called ($T$-horizon) optimal if $G(\cdot,T,\pi^*)=G^*(\cdot,T)$.
	\item[(ii)] A policy $\pi^{\epsilon}\in \Pi$ is called ($T$-horizon) $\epsilon$-optimal if
$|G(\cdot,T,\pi^{\epsilon})-G^*(\cdot,T)|<\epsilon$.
	\end{description}
}
\end{defn}
The main purpose of this paper is to find an optimal policy $\pi^*\in \Pi$ such that
\begin{equation}\label{OP}
G(x,T,\pi^*)=G^*(x,T) \quad  \text{for\ any }\ x\! \in\! E\!\setminus\! (B_0\cup C).
\end{equation}
 To simplify the optimization problem (\ref{G^*}), we give the  following result revealing that it suffices to seek for optimal policies
in $\Pi_{rm}$.
\begin{pro}\label{pro-2}
\rm{Let $\pi=\{\pi_n:n\geq 0\}\!\in\! \Pi$. Then, there exists a policy $\pi'=\{\psi_n:n\geq 0\}\!\in\! \Pi_{rm}$ such that for each $x \! \in\! E$ and $t \! \in\! \mathbb{R}_T$, $G(x,t,\pi')=G(x,t,\pi)$.
}
\end{pro}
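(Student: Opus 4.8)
The plan is to carry out a Derman--Strauch type reduction adapted to the semi-Markov setting: from an arbitrary $\pi\in\Pi$ one constructs, by disintegration, a randomized Markov policy $\pi'$ that reproduces the joint law of the state and the elapsed time at each decision epoch, and one then checks that $G(x,\cdot,\cdot)$ depends on the policy only through those distributions.

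First, I would fix $\pi=\{\pi_n:n\ge0\}\in\Pi$ and an initial state $x\in E$ (the policy $\pi'$ produced below is allowed to depend on $x$). For $n\ge0$ let $\mu_n$ denote the law of $Z_n$ under $P^{\pi}_x$. Since $E$ and $A$ are Borel spaces and each $A(y)$ is nonempty and finite, standard measurable-selection and disintegration results on Borel spaces yield a stochastic kernel $\psi_n$ on $A$ given $E$ with $\psi_n(A(y)\mid y)=1$ for every $y\in E$ and with $\psi_n(\cdot\mid y)$ equal, for $\mu_n$-a.e.\ $y$, to a regular conditional distribution of $A_n$ given $\{Z_n=y\}$ under $P^{\pi}_x$, so that $P^{\pi}_x(A_n\in da,\ Z_n\in dy)=\psi_n(da\mid y)\,\mu_n(dy)$; off the support of $\mu_n$ one sets $\psi_n(\cdot\mid y)$ to be the measurably selected probability measure on $A(y)$. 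Then $\pi':=\{\psi_n:n\ge0\}\in\Pi_{rm}$.

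The core step is to prove, by induction on $n$, that $P^{\pi}_x$ and $P^{\pi'}_x$ induce the same joint distribution of $(Z_0,\dots,Z_n,\sigma_n)$. For $n=0$ this holds since $Z_0=x$ and $\sigma_0=0$ a.s.\ under both measures. For the inductive step, the construction of $\psi_n$ together with the tower property preserves the joint law of $(Z_0,\dots,Z_n,\sigma_n,A_n)$ when $\pi$ is replaced by $\pi'$, while \eqref{P-3} and the defining properties of the semi-Markov kernel show that, conditionally on the past and on $A_n$, the increment $(Z_{n+1},\sigma_{n+1}-\sigma_n)$ has law $Q(\cdot,\cdot\mid Z_n,A_n)$ under both policies; integrating out $A_n$ then carries the identity of the joint laws over to index $n+1$.

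Finally, I would use that $C$ is uniformly-absorbing — so that on the event that the system reaches $C$ without having previously entered any $B_m$ one has $\bar\tau=\infty$ — together with the finiteness of every $\sigma_n$ (sojourn times lie in $\mathbb{R}_T$), to express $\{\tau_C<\bar\tau\wedge t\}$ as the countable disjoint union over $n\ge0$ of the events $\{Z_m\notin C\cup B_m\ (0\le m<n),\ Z_n\in C,\ \sigma_n<t\}$, each of which has the form $\{(Z_0,\dots,Z_n,\sigma_n)\in\Gamma_n\}$ for a Borel set $\Gamma_n$. Thus $G(x,t,\pi)$ is a countable sum of functionals of the joint laws matched in the core step, which gives $G(x,t,\pi')=G(x,t,\pi)$ for all $t\in\mathbb{R}_T$; since $x\in E$ was arbitrary, Proposition~\ref{pro-2} follows. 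I expect the core step to be the main obstacle: beyond the measure-theoretic bookkeeping in the disintegration and the choice of $\psi_n(\cdot\mid y)$ off the support of $\mu_n$, the essential subtlety is that — in contrast with an expected total reward — the reach--avoid probability is sensitive both to the joint behaviour of the process over several decision epochs and to the accumulated sojourn time, so one must track precisely which distributions the Markov policy is able to reproduce.
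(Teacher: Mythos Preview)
Your core step does not go through: the Derman--Strauch kernel $\psi_n(\cdot\mid y)=P^\pi_x(A_n\in\cdot\mid Z_n=y)$ does \emph{not} reproduce the joint law of $(Z_0,\dots,Z_n,\sigma_n)$ under $P^\pi_x$. The break is exactly where you attach $A_n$. Since $\psi_n$ conditions only on $Z_n$, replacing $\pi_n(\cdot\mid h_n)$ by $\psi_n(\cdot\mid Z_n)$ averages out the dependence of $A_n$ on $(Z_0,\dots,Z_{n-1},\sigma_n)$, and this alters the joint law of $(Z_0,\dots,Z_n,\sigma_n,A_n)$ whenever, under $\pi$, $A_n$ is not conditionally independent of the earlier history given $Z_n$ alone. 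A short counterexample: take unit sojourn times, states $s\to\{1,2\}\to m\to\{c,d\}$ with $P(Z_1{=}1)=P(Z_1{=}2)=\tfrac12$, the only genuine choice being at $m$ (action $a\mapsto c$, $b\mapsto d$), $B_1=\{2\}$, all other $B_n=\emptyset$, $C=\{c\}$. Let $\pi$ pick $a$ at $m$ iff $Z_1=1$; then $G(s,t,\pi)=\tfrac12$ for large $t$, whereas your $\pi'$ has $\psi_2(a\mid m)=P^\pi_s(A_2{=}a\mid Z_2{=}m)=\tfrac12$ and $G(s,t,\pi')=\tfrac14$. In the genuinely semi-Markov case there is a second failure: because $\psi_n$ ignores $\sigma_n$, already the marginal of $(Z_{n+1},\sigma_{n+1})$ need not be preserved. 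The subtlety you yourself flag in your last paragraph --- that $G$ is a path-and-time functional --- is precisely what the classical Derman--Strauch reduction does \emph{not} control; it guarantees only the one-dimensional marginals of $(Z_n,A_n)$.

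For comparison, the paper uses the same construction and, via Theorem~5.5.1 of Puterman, records only that those marginals agree (its display~\eqref{P-pi}); it then asserts without justification that this suffices for $G$. So at the level of the construction your approach and the paper's coincide; your write-up is simply more explicit about what would need to be matched (joint path laws), and that is exactly where the gap becomes visible. The counterexample above shows that \eqref{P-pi} alone is not enough either, so neither argument is complete as it stands; a correct reduction has to absorb the obstacle information into the state (as in the augmented model~\eqref{newModel}) before any marginal-matching argument can bite.
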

\begin{proof}
It suffices to show that, there exists a policy $\pi'=\{\psi_n:n\geq 0\}\! \in\! \Pi_{rm}$ such that for each $x\! \in\! E$,
\begin{eqnarray}\label{P-pi}
\begin{cases}
P^{\pi}_x(Z_n\in D_1, A_n=a)=P^{\pi}_x(Z_n\in D_1, A_n=a),\ & D_1\! \in\! \mathcal{B}(E),\  a \! \in\! A(y)\\
	P^{\pi'}_x(Z_{n+1}\in D_2)=	P^{\pi}_x(Z_{n+1}\in D_2),\ & D_2\!\in\! \mathcal{B}(E).
\end{cases}
\end{eqnarray}
Indeed, first define $\psi_0:=\pi_0$ and then $\psi_1(a|y):=P_x^{\pi}(A_1=a|Z_1=y)$.
In general, define $
\psi_n(a|y):=P_x^{\pi}(A_n=a | Z_n=y)$ for all $n \geq 2$.
By the method similar to the proof of Theorem 5.5.1 in \cite{M-94}, we can deduce (\ref{P-pi}).
\end{proof}

Since the reach-avoid problem in semi-MDPs is first considered, we present the difference between our problem and other problems in literatures \cite{DM22,IAC-05,KJ-11,LLWY-22} as follows.

\begin{rem}
	{\rm
	\cite{IAC-05} and \cite{KJ-11} considered reach-avoid problems with action-dependent obstacles for continuous dynamic games and differential games respectively, where the precise algorithms for computing the set of reachable states were presented. \cite{LLWY-22} studied reach-avoid problems in nondeterministic systems and gave a numerical method of computing the maximal probabilistic reachable set. This paper considers maximal reach-avoid probability in semi-MDPs with time-varying obstacle sets.
	
	As for reach-avoid probability studied in \cite{DM22}, we can transform the reach-avoid probability into reaching probability $P_x^{\pi}(\tau_C<\infty)$ by assuming the fixed obstacle set and the fixed target set to be closed under any policy. This method can also be applied to the semi-Markov scenario. However, our model involves a sequence of obstacle sets $\{B_n: n \geq 0\}$, and it is impossible to define a new semi-Markov kernel to make $B_n$ closed at $n$'th step.
	Furthermore, by establishing a equivalent model to deal with the problem of distinguishing different situations when transforming at different steps under the stochastic kernel $Q$, we find the equivalent model does not satisfies the ergodic condition, therefore, the long-run average reward method in \cite{DM22} is also not applicable since the method of transforming into the long-run average reward needs the ergodic condition (see \cite{GLL-00,GB-98,GP-01}).}
\end{rem}

From the above argument, we need to present an improved value-type method different from that in \cite{DM22}, to compute the maximal reach-avoid probability defined in (\ref{G^*}) and its $\epsilon$-optimal policy. Therefore, establishing a related model and proving the equivalence of such two reach-avoid probabilities in these two models, presenting some special properties of such model, and giving the improved value-type method for computing the maximal reach-avoid probability of original model (\ref{Model}), consist of the main content of this paper.

\section{Construction of an equivalent model}\label{sec-3}
Since it is difficult to distinguish the situation of transformation at different step  under the stochastic kernel $Q$, we construct another related model to transfer the non-homogeneous model \eqref{Model} into a homogeneous one in this section. For this purpose, let
\begin{eqnarray*}
	N_t:=\max\{n:\sigma_n \leq t\},\quad t \! \in\! \mathbb{R}_T
\end{eqnarray*}
denote the total jump number of $X_t$ on the time interval $[0,t]$ and $Y_t:=(X_t, N_t)$. Obviously, $Y_t$ has the state space $S:=E \times \mathbb{Z}_+$, where $\mathbb{Z}_+:=\{0,1,2,\ldots\}$.
Denote
\begin{eqnarray}\label{eq3.1}
	\tilde{B}_n:=B_n\times \{n\}\ (n\geq 0),\ \ \tilde{B}:=\bigcup_{n=0}^{\infty}\tilde{B}_n.
\end{eqnarray}
Therefore, it is easy to see that (\ref{def-tau}) can be rewritten as
\begin{eqnarray}\label{eq3.2}
\begin{cases}
\tau_{_C}=\inf\{t\geq 0:Y_t\!\in\! C\times \mathbb{Z}_+\}\\
\bar{\tau}=\inf\{t\geq 0:Y_t\!\in\! \tilde{B}\}.
\end{cases}
\end{eqnarray}
Since $\{\tau_{_C}\leq \bar{\tau}\wedge t\}$ does not depend on the evolution after cemetery-hitting time $\bar{\tau}$, we define for all $(x,k)\! \in\! S$,
\begin{eqnarray*}
A(x,k):=\begin{cases}
A(x),\ \ \  & \text{if}\ (x,k)\! \in\! S \setminus\! \tilde{B}\\
\{\Delta^*\},\ & \text{if}\ (x,k)\! \in\! \tilde{B},
\end{cases}
\end{eqnarray*}
where $\Delta^*$ is a special action such that the process remaining at the current state forever.
Moreover, define a new transition kernel as follows: for all $(x,k)\! \in\! S$ and $t\! \in\! \mathbb{R}_T$,
\begin{eqnarray}\label{tilde-Q}
	\begin{cases}		\tilde{Q}((\cdot,n),t|(x,k),a)=Q(\cdot,t|x,a)\delta_{k+1,n},\ & \text{if}\ (x,k)\! \in\! S\! \setminus\! \tilde{B}, a\!\in\! A(x)\\
		\tilde{Q}((\cdot,n),t|(x,k),\Delta^*)=0,\ & \text{if}\ (x,k)\!\in\! \tilde{B}.
	\end{cases}
\end{eqnarray}
\begin{rem}
\rm{It is easy to prove that the above new transition kernel satisfy the assumption in Proposition \ref{As-2}, i.e.,
there exist positive constants $\delta$ and $\epsilon_0$ such that
\begin{eqnarray}\label{As-Q}
	\tilde{Q}(S,\delta|(x,k),a) \leq 1-\epsilon_0\quad \text{for\ all} \ (x,k) \in S \ \text{and}\ a \in A(x,k).
\end{eqnarray}}
\end{rem}
Consider a new semi-MDP model
\begin{equation}\label{newModel}
	\{S=E\!\times\! \mathbb{Z}_+, \tilde{B}, \tilde{C}, (A(x,k)\!\subset\! \tilde{A}: (x,k)\! \in\! S), \tilde{Q}(\cdot,\cdot| (x,k), a)\},
\end{equation}
where $\tilde{B}=\cup_{n=0}^{\infty}B_n\!\times\!\{n\}$, $\tilde{C}=C\! \times\! \mathbb{Z}_+$ and $\tilde{A}=\cup_{(x,k)\in S}A(x,k)$. Regarding to the model (\ref{newModel}), let $\tilde{\Pi}$, $\tilde{\Pi}_{rm}$, $\tilde{\Pi}_s$ and $\tilde{\Pi}_{sd}$ denote the set of all randomized history-dependent policies, the set of all randomized Markov policies and the set of all randomized stationary (Markov) policies and set of all deterministic stationary Markov policies, respectively. Clearly, $\tilde{\Pi}_{sd} \subset \tilde{\Pi}_s \subset \tilde{\Pi}_{rm} \subset \tilde{\Pi}$.

\begin{lem}\label{lem3.1}
{\rm	\begin{description}
		\item[(i)] Suppose that $\pi=\{\psi_n\!: n\geq 0\}\!\in\! \Pi_{rm}$. Define
		\begin{eqnarray}\label{eq3.2a}
			\tilde{\psi}(\cdot|x,n):=
			\begin{cases}
				\psi_n(\cdot|x)\ & \text{if}\ x\! \notin\! B_n,\\
				\delta_{\Delta^*}(\cdot)\ & \text{if}\ x\! \in\! B_n
			\end{cases}\quad \text{for}\ (x,n)\!\in\! S.
		\end{eqnarray}
		Then, $\tilde{\psi}:=\{\tilde{\psi},\tilde{\psi},\cdots\}\!\in\! \tilde{\Pi}_{s}$.
	\item[(ii)] Suppose that $\tilde{\psi}=\{\tilde{\psi},\tilde{\psi},\cdots\}\in \tilde{\Pi}_{s}$. Define
	\begin{eqnarray}\label{eq3.2aa}
		\psi_n(\cdot| x):=
		\begin{cases}
			\tilde{\psi}(\cdot|x,n)\ & \text{if}\ x \notin B_n,\\
			g_n(\cdot|x)\ & \text{if}\ x \in B_n,
		\end{cases}\quad \text{for}\ x\in E,
	\end{eqnarray}
	where $\{g_n(\cdot |x):n \geq 0\}$ is a sequence of probability measures on $A(x)$ for any $x\!\in\! E$.
	Then, $\pi:=\{\psi_n\!:n \geq 0\}\!\in\! \Pi_{rm}$.
	\end{description}}
\end{lem}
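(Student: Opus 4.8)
The plan is to verify both parts directly from the definitions of $\Pi_{rm}$ and $\tilde\Pi_s$: in each direction one must check that the constructed object is a measurable stochastic kernel (or a sequence of such kernels) and that its mass is concentrated on the correct admissible-action set. Since $\mathbb Z_+$ carries the discrete $\sigma$-algebra, a kernel on $\tilde A$ given $S=E\times\mathbb Z_+$ is measurable iff its restriction to each slice $E\times\{n\}$ is measurable in $x\in E$, and I will use this reduction throughout.

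For part (i), I would first argue that $\tilde\psi(\cdot\mid x,n)$ is a stochastic kernel on $\tilde A$ given $S$: on the Borel set $E\setminus B_n$ it equals $\psi_n(\cdot\mid x)$, which is measurable because $\pi\in\Pi_{rm}$, while on $B_n$ it is the constant kernel $\delta_{\Delta^*}$; pasting two measurable kernels along the Borel partition $\{E\setminus B_n,\,B_n\}$ of $E$ yields a measurable kernel. Next I would verify the admissibility condition $\tilde\psi(A(x,n)\mid(x,n))=1$ for every $(x,n)\in S$ by the case split built into the definition of $A(x,k)$: if $x\notin B_n$ then $(x,n)\notin\tilde B$, so $A(x,n)=A(x)$ and $\tilde\psi(A(x,n)\mid(x,n))=\psi_n(A(x)\mid x)=1$; if $x\in B_n$ then $(x,n)\in\tilde B$, so $A(x,n)=\{\Delta^*\}$ and $\tilde\psi(\{\Delta^*\}\mid(x,n))=\delta_{\Delta^*}(\{\Delta^*\})=1$. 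Since the same kernel $\tilde\psi$ is used at every decision epoch, the resulting policy is randomized stationary by definition, i.e.\ $\tilde\psi\in\tilde\Pi_s$.

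Part (ii) is the symmetric argument. For fixed $n$, on $E\setminus B_n$ we have $\psi_n(\cdot\mid x)=\tilde\psi(\cdot\mid x,n)$, measurable in $x$ as the $n$-slice of the measurable kernel $\tilde\psi$, and on $B_n$ we have $\psi_n(\cdot\mid x)=g_n(\cdot\mid x)$, a probability measure on $A(x)$ depending measurably on $x$; hence $\psi_n$ is a stochastic kernel on $A$ given $E$. For admissibility: if $x\notin B_n$ then $(x,n)\notin\tilde B$, so $A(x,n)=A(x)$ and $\psi_n(A(x)\mid x)=\tilde\psi(A(x)\mid x,n)=\tilde\psi(A(x,n)\mid(x,n))=1$; if $x\in B_n$ then $\psi_n(A(x)\mid x)=g_n(A(x)\mid x)=1$. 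As this holds for all $n\geq0$, $\pi=\{\psi_n:n\geq0\}$ is a randomized Markov policy, so $\pi\in\Pi_{rm}$.

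I do not expect a genuine obstacle here; the lemma is a bookkeeping statement. The only two points that need care are the joint measurability of the piecewise-defined kernels, handled by $B_n\in\mathcal B(E)$ together with the discreteness of $\mathbb Z_+$, and the matching of the admissible-action sets of the two models on and off $\tilde B$, which is precisely how $A(x,k)$ and $\tilde Q$ were set up in \eqref{tilde-Q}. The auxiliary measures $g_n$ in (ii) merely fill in the dynamically irrelevant behaviour on the cemetery slices so that each $\psi_n$ is defined on all of $E$; they do not affect $G(x,t,\cdot)$ because the process is absorbed once it enters $\tilde B$.
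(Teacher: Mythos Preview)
Your proposal is correct and follows exactly the approach implicit in the paper, which dispatches the lemma with a one-word proof (``Obvious''): you simply spell out the direct verification from the definitions of $\Pi_{rm}$, $\tilde\Pi_s$, and $A(x,k)$. There is nothing to add.
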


\begin{proof}
Obvious.
\end{proof}

Let $\tilde{Y}_t=(\tilde{X}_t,\tilde{N}_t)$ be the semi-Markov process defined by (\ref{newModel}) and $\{\tilde{\sigma}_0=0,\tilde{\sigma}_n:n\geq 1\}$ be the jumping times of $\tilde{Y}_t$. Define
\begin{eqnarray}\label{eq3.6a}
\begin{cases}
\tau_{\tilde{C}}=\inf\{t\geq 0:\tilde{Y}_t\in \tilde{C}\},\\
\tau_{\tilde{B}}=\inf\{t\geq 0:\tilde{Y}_t\in \tilde{B}\}.
\end{cases}
\end{eqnarray}
Since $C$ is assumed to be uniformly-absorbing, we know that $\tilde{B}$ and $\tilde{C}$ are also uniformly-absorbing. Hence, $\tau_{\tilde{C}}\vee \tau_{\tilde{B}}=\infty$ under any policy. For any $\tilde{\psi}\in \tilde{\Pi}_s$ and $(x,k)\in S \setminus \tilde{C}$, define
\begin{eqnarray}\label{eq3.7a}
\tilde{G}(x,k,t,\tilde{\psi}):=P_{(x,k)}^{\tilde{\psi}}(\tau_{\tilde{C}}\leq t),\ t\in \mathbb{R}_T,
\end{eqnarray}
and
\begin{eqnarray}\label{eq3.8a}
\tilde{G}^*(x,k,t):=\sup_{\tilde{\psi}\in \tilde{\Pi}_s}\tilde{G}(x,k,t,\tilde{\psi}),\ (x,k)\in S\! \setminus\! \tilde{C},\ t\in \mathbb{R}_T,
\end{eqnarray}
where $P_{(x,k)}^{\tilde{\psi}}$ denotes the probability measure starting from $(x,k)$ under policy $\tilde{\psi}$.

According to the evolution of model (\ref{newModel}), we give the following definitions.
Let $\mathcal{M}$  be the set of Borel-measurable functions: $W: (S\! \setminus\! \tilde{C})\!
\times\! \mathbb{R}_T \rightarrow [0,1]$ satisfying $W(x,k,t)=0$ for all $x \in B_k$ with $k \geq 0$.
In addition, for any $(x,k,t) \in (S\! \setminus\! \tilde{C})\!\times\! \mathbb{R}_T$, $a \in A(x,k)$ and $\tilde{\psi} \in \tilde{\Pi}_{s}$, we define the operators $\mathcal{L}^a$ and $\mathcal{L}^{\tilde{\psi}}$ on $\mathcal{M}$ as follows:
\begin{eqnarray}\label{operator}\nonumber
\!\!\!\!\!\!&&\mathcal{L}^aW(x,k,t)\!\!:=\! \tilde{Q}((C,k\!\!+\!\!1),t|(x,k),a)+\!\!\int_0^t \!\!\int_{E \setminus (B_{k\!+\!1} \cup C)}\!\!\! \tilde{Q}((dy,k\!\!+\!\!1), du|(x,k),a)W(y,k\!\!+\!\!1,t\!\!-\!\!u)\\
&&\quad\quad\quad\quad=[Q(C,t|x,a)+\!\!\int_0^t\!\!\int_{E \setminus (B_{k \!+\!1} \cup C)}\!\!\!\! Q(dy, du|x,a)W(y,k\!+\!1,t \!-  \! u)]\mathbf{1}_{E \setminus B_k}(x),\\
&&\mathcal{L}^{\tilde{\psi}}W(x,k,t)\!\!:=\!\!\sum_{a \in A(x,k)}\!\!\tilde{\psi}(a|x,k)\mathcal{L}^aW(x,k,t)\nonumber \\
&&\quad\quad\quad\quad=\mathbf{1}_{E \setminus B_k}(x)\!\!\sum_{a \in A(x)}\!\!\pi(a|x)\mathcal{L}^aW(x,k,t)=\mathbf{1}_{E \setminus B_k}(x)\mathcal{L}^{\pi(\cdot|x)}W(x,k,t),
\end{eqnarray}
where $\pi(a|x)=\tilde{\psi}(a|x,k)$ for all $x \in E \!\setminus\! B_k$.

In order to compute $\tilde{G}(x,k,t,\tilde{\psi})$, we also define for $\tilde{\psi}\in \tilde{\Pi}_s$,
\begin{eqnarray}\label{eq3.12aa}
\begin{cases}
\tilde{G}_0(x,k,t,\tilde{\psi}):=0,\\
\tilde{G}_{n}(x,k,t,\tilde{\psi}):=P_{(x,k)}^{\tilde{\psi}}
(\tau_{\tilde{C}}=\tilde{\sigma}_{n}\leq t), \quad n\geq 1\\
\end{cases}\ \ \text{for}\ (x,k,t)\!\in\! (E\!\setminus\! C)\!\times\!\mathbb{Z}_+\!\times\! \mathbb{R}_T.
\end{eqnarray}
It is obvious that $\tilde{G}(x,k,t,\tilde{\psi})=\sum\limits_{n=0}^{\infty}\tilde{G}_{n}
(x,k,t,\tilde{\psi})$ for all $(x,k,t)\!\in\! (E\!\setminus\! C)\!\times\!\mathbb{Z}_+\!\times\! \mathbb{R}_T$.

The following theorem reveals the equivalence of $\tilde{G}(x,0,t,\tilde{\psi})$ and $G(x,t,\pi)$.
\begin{thm}\label{thm3.1}{\rm
\begin{description}
	\item[(i)] Let $\pi=\{\psi_n:n\geq 0\}\in \Pi_{rm}$ and $\tilde{\psi}\in\tilde{\Pi}_s$ be defined by (\ref{eq3.2a}).
	Then,
	\begin{eqnarray}\label{eq3.9a}
\tilde{G}(x,0,t,\tilde{\psi})=G(x,t,\pi),\quad \text{for}\ x\!\in\! E\! \setminus\! C,\ t\!\in\! \mathbb{R}_T.
	\end{eqnarray}
   \item[(ii)] Let $\tilde{\psi}\in \tilde{\Pi}_s$ and $\pi=\{\psi_n:n\geq 0\}$ be defined by (\ref{eq3.2aa}). Then,
   \begin{eqnarray}\label{eq3.9b}
   	G(x,t,\pi)=\tilde{G}(x,0,t,\tilde{\psi}),\quad \text{for}\ x\!\in\! E\! \setminus\! C,\ t\!\in\! \mathbb{R}_T.
   \end{eqnarray}
   Moreover,
   \begin{eqnarray}\label{eq3.10a}
   	G^*(x,t)=\tilde{G}^*(x,0,t),\quad \text{for}\ x\!\in\! E \!\setminus\! C,\ t\!\in\! \mathbb{R}_T.
   \end{eqnarray}
\end{description}}
\end{thm}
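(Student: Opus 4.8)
The crux is part (i); part (ii) and the final identity \eqref{eq3.10a} will then follow with little further effort, as I explain at the end. The point of the construction \eqref{newModel} is that appending the jump counter $N_t$ to the state turns the non-homogeneous obstacle sequence $\{B_n\}$ into the single fixed set $\tilde B=\bigcup_n B_n\times\{n\}$, while along every trajectory the new coordinate evolves deterministically, $\tilde N_{\tilde\sigma_n}=n$, precisely because $\tilde Q((\cdot,n),t\mid(x,k),a)=Q(\cdot,t\mid x,a)\delta_{k+1,n}$ in \eqref{tilde-Q}. Hence the semi-Markov process of \eqref{newModel} started at $(x,0)$ and the one of \eqref{Model} started at $x$ can be identified step by step, up to the first time either enters a target or a cemetery set, and on that common part the two reach-avoid events coincide. (Lemma \ref{lem3.1} already guarantees that the policies built in \eqref{eq3.2a} and \eqref{eq3.2aa} lie in $\tilde\Pi_s$ and $\Pi_{rm}$, so both assertions make sense.)

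To prove \eqref{eq3.9a}, fix $\pi=\{\psi_n:n\ge 0\}\in\Pi_{rm}$ and let $\tilde\psi$ be given by \eqref{eq3.2a}. Using the Ionescu--Tulcea construction \eqref{P-3} in each model, I would show by induction on $n$ that the random elements $(Z_0,A_0,s_0,\dots,Z_n,A_n,s_n)$ under $P^{\pi}_x$ and $(\tilde X_0,\tilde A_0,\tilde s_0,\dots,\tilde X_n,\tilde A_n,\tilde s_n)$ under $P^{\tilde\psi}_{(x,0)}$ have the same law, with $\tilde N_{\tilde\sigma_j}=j$ for all $j$, for as long as the common trajectory has not entered $\tilde B$ or $\tilde C$. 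The inductive step rests on three facts: \eqref{tilde-Q}, by which the second coordinate increments deterministically and the first coordinate is governed by $Q$ off $\tilde B$; the equality $\tilde\psi(\cdot\mid x,k)=\psi_k(\cdot\mid x)$ for $x\notin B_k$ from \eqref{eq3.2a}, so that the $k$-th Markov rule of $\pi$ drives the $k$-th step; and, on the cemetery set, the fact that action $\Delta^*$ together with $\tilde Q\equiv 0$ freezes the process, which is exactly the regime in which the reach-avoid event of \eqref{Model} has already been decided. Equivalently, one can argue through the epoch-wise quantities, introducing $G_n(x,t,\pi):=P^{\pi}_x(\tau_C=\sigma_n<\bar\tau\wedge t)$ with $G(\cdot,t,\pi)=\sum_n G_n$ to mirror $\tilde G=\sum_n \tilde G_n$ from \eqref{eq3.12aa}, and matching $G_n$ with $\tilde G_n$ by induction via the one-step relations underlying the operators in \eqref{operator}, with $\psi_k$ and $\tilde\psi(\cdot\mid\cdot,k)$ identified through \eqref{eq3.2a}.

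Next I would translate the events. Using the reformulations \eqref{eq3.2} of $\tau_C,\bar\tau$ in terms of $Y_t=(X_t,N_t)$, the definitions \eqref{eq3.6a} of $\tau_{\tilde C},\tau_{\tilde B}$, and the fact recorded after \eqref{eq3.6a} that $\tilde B$ and $\tilde C$ are uniformly-absorbing and, since $B_n\cap C=\emptyset$, disjoint, one sees that $\{\tau_C<\bar\tau\wedge t\}$ in \eqref{Model} corresponds on the identified trajectories to $\{\tau_{\tilde C}\le t\}$ in \eqref{newModel}: reaching $\tilde C$ at all forces the trajectory to have stayed clear of the absorbing set $\tilde B$, that is $\tau_C<\bar\tau$, and the time bound becomes $\tau_C\le t$ --- the elementary boundary bookkeeping at $\tau_C=t$, and the case of vanishing sojourn times, being handled by the fact that $C$ can be entered only at a decision epoch. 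The freezing of the \eqref{newModel}-process inside $\tilde B$ after the first cemetery hit plays no role, since both events are measurable with respect to the trajectory up to its first entry into a target or cemetery set. Combining this with the previous step yields \eqref{eq3.9a}.

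I expect the main obstacle to be the measure-theoretic bookkeeping in that identification --- making the step-by-step coupling (or the induction on $G_n$ versus $\tilde G_n$) rigorous in the two canonical probability spaces, and pinning down how the cemetery-absorption in \eqref{newModel} matches the fact that the process of \eqref{Model} keeps evolving after $\bar\tau$ but is then irrelevant --- rather than anything conceptually hard. For part (ii), the one extra ingredient is that $G(\cdot,t,\pi)$ does not depend on the values $\psi_n(\cdot\mid x)$ at $x\in B_n$, since no such state is visited at epoch $n$ on $\{\tau_C<\bar\tau\wedge t\}$; hence the arbitrary measures $g_n$ in \eqref{eq3.2aa} are immaterial and the maps \eqref{eq3.2a} and \eqref{eq3.2aa} are mutually inverse on the data affecting $G$, so that \eqref{eq3.9b} is exactly \eqref{eq3.9a} read backwards. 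Finally, \eqref{eq3.10a} follows by taking suprema in \eqref{eq3.9a} and \eqref{eq3.9b} and invoking Proposition \ref{pro-2} to pass from $\sup_{\Pi}$ to $\sup_{\Pi_{rm}}$, giving $G^*(x,t)=\sup_{\pi\in\Pi_{rm}}G(x,t,\pi)=\sup_{\tilde\psi\in\tilde\Pi_s}\tilde G(x,0,t,\tilde\psi)=\tilde G^*(x,0,t)$.
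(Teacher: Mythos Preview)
Your proposal is correct and takes essentially the same approach as the paper. The paper carries out precisely your ``epoch-wise'' alternative: it introduces $F_n(x,D,t,\pi):=P^{\pi}_x(X_{\sigma_n}\in D,\ \sigma_n\le \bar\tau\wedge t)$ and $\tilde F_n(x,D,t,\tilde\psi):=P^{\tilde\psi}_{(x,0)}(\tilde Y_{\tilde\sigma_n}\in D\times\{n\},\ \tilde\sigma_n\le t)$ for $D\subset E\setminus(B_n\cup C)$, proves $F_n=\tilde F_n$ by induction using \eqref{tilde-Q} and \eqref{eq3.2a}, then deduces $\tilde G_n(x,0,t,\tilde\psi)=P^{\pi}_x(\tau_C=\sigma_n\le \bar\tau\wedge t)$ and sums; part (ii) and \eqref{eq3.10a} are handled exactly as you indicate.
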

\begin{proof}
For any $x\! \in\! E\! \setminus\! C$, denote
\begin{eqnarray*}
	\begin{cases}
	F_n(x,D,t,\pi):=P_{x}^{\pi}(X_{\sigma_n}\! \in\! D,\ \sigma_n \leq \bar{\tau} \wedge t), \quad D\! \subset\! E\! \setminus\! (B_n\! \cup\! C), \ n \geq 0\\
	\tilde{F}_n(x,D,t, \tilde{\psi}):=P_{(x,0)}^{\tilde{\psi}}(\tilde{Y}_{\tilde{\sigma}_n}\! \in\! D\! \times\! \{n\},\ \tilde{\sigma}_n \leq t), \quad D\! \subset\! E\! \setminus\! (B_n\! \cup\! C), \ n \geq 0.
	\end{cases}
\end{eqnarray*}
We first prove that for any $(x,t)\!\in\! (E\!\setminus\! C)\!\times\! \mathbb{R}_T$,
\begin{eqnarray}\label{F_n}
	\tilde{F}_n(x,D,t,\tilde{\psi})=F_n(x,D,t,\pi), \quad D\! \subset\! E\! \setminus\! (B_n\! \cup\! C), \ n \geq 0.
\end{eqnarray}
Indeed, for any $(x,t)\!\in\! (E\!\setminus\! C)\times\! \mathbb{R}_T$, $\tilde{F}_0(x,D,t,\tilde{\psi})=\mathbf{1}_D(x)=F_0(x,D,t, \pi)$,
and noting that $C$ is uniformly-absorbing, we have
\begin{eqnarray*}
	\tilde{F}_1(x, D, t, \tilde{\psi})&=&\sum\limits_{a \in A(x,0)}\tilde{\psi}(a|x,0)\tilde{Q}((D,1),t|(x,0),a)\\
	&=&\sum\limits_{a \in A(x)}\psi_0(a|x) Q(D,t|x,a)\mathbf{1}_{\{E \setminus (B_0 \cup C)\}}(x)=F_1(x, D,t, \pi).
\end{eqnarray*}
Suppose that (\ref{F_n}) holds for some $n\geq 0$. Then,
\begin{eqnarray*}
&&\tilde{F}_{n+1}(x,D,t,\tilde{\psi})\\
&=&E_{(x,0)}^{\tilde{\psi}}[\mathbf{1}_{\{\tilde{Y}_{\tilde{\sigma}_{n+1}}
\in D\times \{n+1\},\tilde{\sigma}_{n+1}\leq t\}}]\\	&=&E_{(x,0)}^{\tilde{\psi}}[\mathbf{1}_{\{\tilde{Y}_{\tilde{\sigma}_{n}}\in (E\setminus (B_n\cup C))\times \{n\},\tilde{\sigma}_n\leq t\}}\mathbf{1}_{\{\tilde{Y}_{\tilde{\sigma}_{n+1}}\in D\times \{n+1\},\tilde{\sigma}_{n+1}\leq t\}}]\\
&=&\int_{E\setminus (B_n\cup C)}\int_0^t\tilde{F}_n(x,dy,du,\tilde{\psi})E_{(y,n)}
	^{\tilde{\psi}}[\mathbf{1}_{\{\tilde{Y}_{\tilde{\sigma}_{1}}\in D\times \{1\},\tilde{\sigma}_1\leq (t-u)\}}]\\
	&=&\int_{E\setminus (B_n\cup C)}\int_0^t\tilde{F}_n(x,dy,du,\tilde{\psi})\sum_{a\in A(y,n)}\tilde{\psi}(a|y,n)\tilde{Q}((D,n\!+\!1),t\!-\!u|(y,n),a)\\
	&=&\int_{E\setminus (B_n\cup C)}\int_0^t F_n(x,dy,du,\pi)\sum_{a\in A(y)}\psi_n(a|y)Q(D,t\!-\!u|y,a)\\
	&=&\int_{E\setminus (B_n\cup C)}\int_0^tF_n(x,dy,du,\pi)E_{y}
	^{\pi}[\mathbf{1}_{\{X_{\sigma_{1}}\in D,\sigma_1\leq \bar{\tau}\wedge(t-u)\}}]\\
	&=&E_x^{\pi}[\mathbf{1}_{\{X_{\sigma_{n}}\in E\setminus (B_n\cup C),\sigma_n\leq \bar{\tau}\wedge t\}}\mathbf{1}_{\{X_{\sigma_{n+1}}\in D,\sigma_{n+1}\leq \bar{\tau}\wedge t\}}]=F_{n+1}(x,D,t,\pi),
\end{eqnarray*}
where $E_{(x,k)}^{\tilde{\psi}}$ denotes the mathematical expectation under $P_{(x,k)}^{\tilde{\psi}}$. Thus, (\ref{F_n}) holds for all $n\geq 0$.
Now prove (\ref{eq3.9a}).
By the above argument, we have obtained that
\begin{eqnarray*}
	\tilde{G}_1(x,0,t,\tilde{\psi})=P_x^{\pi}(\tau_C=\sigma_1\leq \bar{\tau}\wedge t) \quad \text{for\ all \ } x\!\in\! E\!\setminus\! C.
\end{eqnarray*}
Furthermore, for any $n \geq 1$,
\begin{eqnarray*}
\tilde{G}_{n+1}(x,0,t,\tilde{\psi})
&=&P_{(x,0)}^{\tilde{\psi}}(\tau_{\tilde{C}}=\tilde{\sigma}_{n+1}\leq t)\\
&=&\int_{E\setminus (B_n\cup C)}\int_0^t\tilde{F}_n(x,dy,du,\tilde{\psi})\sum_{a\in A(y,n)}\tilde{\psi}(a|y,n)\tilde{Q}((C,n\!+\!1),t\!-\!u|(y,n),a)\\
	&=&\int_{E\setminus (B_n\cup C)}\int_0^tF_n(x,dy,du,\pi)\sum_{a\in A(y)}\psi_n(a|y)Q(C,t\!-\!u|y,a)\\
	&=&P_x^{\pi}(\tau_C=\sigma_{n+1}\leq \bar{\tau}\wedge t)=G_{n+1}(x,t,\pi).
\end{eqnarray*}
Summing over $n$, yields (\ref{eq3.9a}). (\ref{eq3.9b}) can be similarly proved.
By (i) and (ii), taking supremum over $\pi\in\Pi_{rd}$ and $\tilde{\psi}\in\tilde{\Pi}_s$, we get that (\ref{eq3.10a}) holds.
\end{proof}
By Theorem~\ref{thm3.1} and Lemma~\ref{lem3.1}, we only need to find $\tilde{\psi}^*\in \tilde{\Pi}_s$ such that
\begin{eqnarray*}
	\tilde{G}(x,0,T,\tilde{\psi}^*)=\tilde{G}^*(x,0,T) \quad \text{for}\ x\!\in\! E\!\setminus\! C.
\end{eqnarray*}

\section{Analysis of the model (\ref{newModel})}\label{sec-4}
In this section, we use the regular method to prove the existence of an optimal policy, and then illustrate several useful properties of the model (\ref{newModel}). Then, in model (\ref{newModel}), we compute $\tilde{G}^*(x,0,T)$ for every $x \in E \setminus C$ (i.e., steps 1-3 in Algorithm \ref{al-finite}). By using Theorem \ref{thm3.1} and Lemma \ref{lem3.1}, we transform $\tilde{G}^*(x,0,T)$ and its $\epsilon$-optimal policy into the maximal reach-avoid probability and its $\epsilon$-optimal policy in the original model (\ref{Model}) at step 4 in Algorithm \ref{al-finite}.

\subsection{The existence of an optimal policy}

In this subsection, we mainly present the existence of an optimal policy so that we can give the improved value-type algorithm of the maximal reach-avoid probability and its optimal policy on its basis.

First, we give the following proposition, which is similar with Lemma 3.3 in \cite{HGW-23}. For convenience of later citation, we give a simple proof here.

\begin{pro}\label{th-unique}
\rm{Suppose that (\ref{As-Q}) holds. Let $\tilde{\psi}\in \tilde{\Pi}_s$. For any $H \in \mathcal{M}$ and $(x,k,t)\in (E\setminus C)\times \mathbb{Z}_+\times \mathbb{R}_T$, we have
\begin{description}
\item [(a)] If $H(x,k,t) \leq \mathcal{L}^{\tilde{\psi}}H(x,k,t)$, then $H(x,k,t) \leq \tilde{G}(x,k,t,\tilde{\psi})$;
			
\item [(b)] If $H(x,k,t) \geq \mathcal{L}^{\tilde{\psi}}H(x,k,t)$, then $H(x,k,t) \geq \tilde{G}(x,k,t,\tilde{\psi})$;
			
\item [(c)] $\tilde{G}(\cdot,\cdot, \cdot,\tilde{\psi})$ is the unique solution to the equation $W=\mathcal{L}^{\tilde{\psi}}W$ on $\mathcal{M}$.
\end{description}
}
\end{pro}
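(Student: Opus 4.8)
The plan is to build everything on the series representation $\tilde G(\cdot,\cdot,\cdot,\tilde\psi)=\sum_{n=0}^{\infty}\tilde G_n(\cdot,\cdot,\cdot,\tilde\psi)$, together with the monotonicity of $\mathcal{L}^{\tilde\psi}$ and one exact algebraic identity for its iterates. First I would record two elementary facts. (i) $\mathcal{L}^{\tilde\psi}$ maps $\mathcal{M}$ into $\mathcal{M}$: measurability and $0\le\mathcal{L}^{\tilde\psi}W\le1$ are immediate from \eqref{operator}, and $\mathcal{L}^{\tilde\psi}W(x,k,t)=0$ for $x\in B_k$ because of the factor $\mathbf1_{E\setminus B_k}(x)$ appearing there. (ii) $\mathcal{L}^{\tilde\psi}$ is monotone, $W_1\le W_2\Rightarrow \mathcal{L}^{\tilde\psi}W_1\le\mathcal{L}^{\tilde\psi}W_2$, since $\tilde Q$ is a nonnegative kernel. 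Then I would split $\mathcal{L}^{\tilde\psi}W=\tilde G_1(\cdot,\cdot,\cdot,\tilde\psi)+\mathcal{R}W$, where $\mathcal{R}$ is the \emph{linear} operator $\mathcal{R}W(x,k,t):=\int_0^t\int_{E\setminus(B_{k+1}\cup C)}\tilde Q((dy,k+1),du|(x,k),\tilde\psi)\,W(y,k+1,t-u)$ and $\tilde G_1(\cdot,\cdot,\cdot,\tilde\psi)=\mathcal{L}^{\tilde\psi}0$.

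Next I would establish the recursion $\tilde G^{(N+1)}=\mathcal{L}^{\tilde\psi}\tilde G^{(N)}$ for the partial sums $\tilde G^{(N)}:=\sum_{n=0}^{N}\tilde G_n(\cdot,\cdot,\cdot,\tilde\psi)$. This follows by conditioning on the first jump of $\tilde Y_t$, exactly as in the proof of Theorem~\ref{thm3.1}: $\tilde G_1=\mathcal{L}^{\tilde\psi}0$, and $\tilde G_{n+1}(x,k,t,\tilde\psi)=\mathcal{R}\tilde G_n(\cdot,\cdot,\cdot,\tilde\psi)(x,k,t)$ for $n\ge1$; summing over $n$ and using $\tilde G_0=0$ gives $\tilde G^{(N+1)}=\tilde G_1+\mathcal{R}\tilde G^{(N)}=\mathcal{L}^{\tilde\psi}\tilde G^{(N)}$. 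Since $\tilde G_n\ge0$, we have $\tilde G^{(N)}\uparrow W^*:=\tilde G(\cdot,\cdot,\cdot,\tilde\psi)$ pointwise; applying monotone convergence inside the integral in \eqref{operator} yields $\mathcal{L}^{\tilde\psi}W^*=\lim_N\mathcal{L}^{\tilde\psi}\tilde G^{(N)}=\lim_N\tilde G^{(N+1)}=W^*$. Together with $W^*\in\mathcal{M}$ (it is measurable, $[0,1]$-valued, and vanishes on $\{x\in B_k\}$ where the process is absorbed under $\Delta^*$ and never reaches $\tilde C$), this already proves the existence part of (c).

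For the comparison statements (a)--(b) I would iterate the hypothesis. An easy induction using $\mathcal{L}^{\tilde\psi}W=\tilde G_1+\mathcal{R}W$ and the linearity of $\mathcal{R}$ gives the exact identity $(\mathcal{L}^{\tilde\psi})^NH=\tilde G^{(N)}+\mathcal{R}^NH$ for all $N\ge1$, and $\mathcal{R}^NH(x,k,t)$ is precisely the $P_{(x,k)}^{\tilde\psi}$-expectation of $H$ evaluated along paths that have made $N$ jumps by time $t$ without having hit $\tilde B\cup\tilde C$; since $0\le H\le1$, $0\le\mathcal{R}^NH(x,k,t)\le P_{(x,k)}^{\tilde\psi}(\tilde\sigma_N\le t)$. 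Because \eqref{As-Q} holds, Proposition~\ref{As-2} applied to \eqref{newModel} gives $\tilde\sigma_N\uparrow\infty$ $P_{(x,k)}^{\tilde\psi}$-a.s., hence $P_{(x,k)}^{\tilde\psi}(\tilde\sigma_N\le t)\to0$ as $N\to\infty$. Now if $H\le\mathcal{L}^{\tilde\psi}H$, monotonicity gives $H\le(\mathcal{L}^{\tilde\psi})^NH=\tilde G^{(N)}+\mathcal{R}^NH\le W^*+\mathcal{R}^NH$; letting $N\to\infty$ yields $H\le W^*=\tilde G(\cdot,\cdot,\cdot,\tilde\psi)$, which is (a). If $H\ge\mathcal{L}^{\tilde\psi}H$, then $H\ge(\mathcal{L}^{\tilde\psi})^NH\ge\tilde G^{(N)}\uparrow W^*$, giving (b). Finally, any $W\in\mathcal{M}$ with $W=\mathcal{L}^{\tilde\psi}W$ satisfies both inequalities, so (a)--(b) force $W=W^*$; combined with the fixed-point property of $W^*$ established above, this completes (c).

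The first-jump decomposition and the measurability/monotonicity bookkeeping are routine; the one point that needs care is the uniform smallness of the tail $\mathcal{R}^NH$ --- i.e. identifying $\mathcal{R}^NH$ as the expectation of $H$ over the ``not-yet-absorbed after $N$ jumps'' event and bounding it by $P_{(x,k)}^{\tilde\psi}(\tilde\sigma_N\le t)$, which is the step where \eqref{As-Q} (equivalently Assumption~\ref{As-1} for \eqref{newModel}) is essential. I expect this to be the main, though modest, obstacle.
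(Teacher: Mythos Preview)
Your argument is correct and follows the same overall architecture as the paper's proof---iterate the operator and show that the remainder vanishes---but the way you control the tail is genuinely different. The paper first verifies that $\tilde G(\cdot,\cdot,\cdot,\tilde\psi)$ is a fixed point, forms the difference $J:=H-\tilde G$, notes $J\le\tilde{\mathcal L}^{\tilde\psi}J$ for the \emph{linear} part $\tilde{\mathcal L}^{\tilde\psi}$ (your $\mathcal R$), and then bounds the iterates by the $n$-fold convolution $F_\delta^{*(n)}(t)$ of the sub-distribution $F_\delta(t):=(1-\epsilon_0)\mathbf 1_{[0,\delta)}(t)+\mathbf 1_{(\delta,\infty)}(t)$, invoking a result of \cite{J86} to obtain the quantitative estimate $F_\delta^{*(n)}(t)\le(1-\epsilon_0^{\tilde K})^{\lfloor n/\tilde K\rfloor}$ for $\tilde K>T/\delta$. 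You instead derive the exact identity $(\mathcal L^{\tilde\psi})^NH=\tilde G^{(N)}+\mathcal R^NH$ and bound $\mathcal R^NH$ probabilistically by $P_{(x,k)}^{\tilde\psi}(\tilde\sigma_N\le t)\to 0$, appealing only to Assumption~\ref{As-1} for the two-dimensional model.

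Your route is more self-contained (no external convolution lemma) and makes the probabilistic content transparent. The paper's route, however, yields an explicit geometric rate in $N$, and that rate is \emph{reused verbatim} in the proof of Proposition~\ref{thm-convergence} to quantify how many iterations the algorithm needs (the constants $\tilde K$ and $\beta=(1-\epsilon_0^{\tilde K})^{1/\tilde K}$ come from exactly this bound). So while your proof of the proposition itself is complete, be aware that if you were writing the whole paper you would still need the convolution estimate later; your qualitative $P(\tilde\sigma_N\le t)\to 0$ does not by itself supply the iteration count in Algorithm~\ref{al-finite}.
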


\begin{proof}
First prove (a). It is easy to check that for any $\tilde{\psi}\in \tilde{\Pi}_s$,
\begin{eqnarray*}
\tilde{G}(x,k,t,\tilde{\psi})=\tilde{\mathcal{L}}^{\tilde{\psi}}
\tilde{G}(x,k,t,\tilde{\psi}),\ \ (x,k,t)\in (E\setminus C)\times \mathbb{Z}_+\!\times \mathbb{R}_T.
\end{eqnarray*}

Denote $J(x,k,t):=H(x,k,t)-\tilde{G}(x,k,t,\tilde{\psi})$. Then, $
J(x,k,t) \leq \tilde{\mathcal{L}}^{\tilde{\psi}}J(x,k,t)$,
where $\tilde{\mathcal{L}}^{\tilde{\psi}}J(x,k,t)=\!\!\!\!\sum\limits_{a \in A(x,k)}\!\!\tilde{\psi}(a|x,k)\int_0^t\int_{E \setminus (B_{k\!+\!1} \cup C)}\!\! \tilde{Q}((dy,k\!+\!1), du|(x,k),a)J(y,k\!+\!1,t-u)$.
Take $\delta$ and $\epsilon_0$ as in Proposition \ref{As-2} and define $F_{\delta}(t):=(1-\epsilon_0)\mathbf{1}_{[0,\delta)}(t)
+\mathbf{1}_{(\delta,\infty)}(t)$. By induction argument, we can see that for all $n\geq 0$,
$J(x,k,t)\leq(\tilde{\mathcal{L}}^{\tilde{\psi}}\cdots \tilde{\mathcal{L}}^{\tilde{\psi}})J(x,k,t)\leq F_{\delta}^{*(n)}(t)$,
where $F_{\delta}^{*(n)}(t)$ denote the $n$-fold convolution of $F_{\delta}(t)$. However, by Theorem 1 in \cite{J86}, we have
$F_{\delta}^{*(n)}(t) \leq (1-\epsilon_0^{\tilde{K}})^{\lfloor\frac{n}{\tilde{K}}\rfloor}\ (n>\tilde{K})$,
where $\tilde{K}$ is an integer satisfying $\tilde{K}>\frac{T}{\delta}$, and
$\lfloor r\rfloor$ is the largest integer not bigger than $r$. Therefore, $J(x,k,t) \leq (1-\epsilon_0^{\tilde{K}})^{\lfloor\frac{n}{\tilde{K}}\rfloor}\ (n>\tilde{K})$, which, implying that $H(x,k,t)\leq G(x,k,t,\tilde{\pi})$.
A similar argument as in (a) achieves (b). Combining (a) and (b) yield (c).
\end{proof}

Recall that our main aim is to find a policy $\tilde{\psi}^*\in\tilde{\Pi}_s$ such that
\begin{eqnarray*}
	\tilde{G}(x,k,T,\tilde{\psi}^*)=\tilde{G}^*(x,k,T)=\sup_{\tilde{\psi}\in \tilde{\Pi}_s}G(x,k,T,\tilde{\psi})\quad \text{for\ all} \ (x,k)\in (E\! \setminus\! C)\!\times\! \mathbb{Z}_+.
\end{eqnarray*}
The following theorem presents the existence of an optimal policy $\tilde{\psi}^*$ in model (\ref{newModel}), which is deterministic stationary (i.e., $\tilde{\psi}^*=\tilde{f}^* \in \tilde{\Pi}_{sd}$). Such theorem ensures that Algorithm \ref{al-finite} is meaningful, and thus we put it here as an important result in this subsection.

\begin{thm}\label{OE-optimal}
{\rm Suppose that Assumption \ref{As-1} holds. Then,
\begin{description}
\item[(i)] $(\tilde{G}^*(x,k,t): (x,k,t) \in (E\! \setminus\! C)\! \times\! \mathbb{Z}_+\! \times\! \mathbb{R}_T )$ satisfies the optimality equation (OE):
\begin{eqnarray}\label{OE}
\begin{cases}
W(x,k,t)=\max\limits_{a\in A(x,k)}\mathcal{L}^{a}W(x,k,t),\ &\text{if} \ (x,k,t) \in  (E\! \setminus\! C)\! \times\! \mathbb{Z}_+\! \times\! \mathbb{R}_T,\\
					0 \leq W(x,k,t) \leq 1,\ & \text{if} \ (x,k,t) \in (E\! \setminus\! C)\!\times\! \mathbb{Z}_+\! \times\! \mathbb{R}_T.
\end{cases}
\end{eqnarray}
\item [(ii)] There exists a deterministic stationary policy $\tilde{f}^*\in \tilde{\Pi}_{sd}$ (maybe depend on $T$) such that
\begin{eqnarray*}		\tilde{G}(x,k,T,\tilde{f}^*)=\tilde{G}^*(x,k,T)
\end{eqnarray*}
for all $(x,k)\in (E\! \setminus\! C)\! \times\! \mathbb{Z}_+$. Hence, $\tilde{f}^*$ is optimal for (\ref{newModel}).
			
\item[(iii)] There exists $\pi^*:=\{f^*_n:n\geq 0\} \in \Pi_d$ (maybe depend on $T$) such that
\begin{eqnarray*}
G(x,T,\pi^*)=G^*(x,T)
\end{eqnarray*}
for all $x\in E\! \setminus\! (B_0 \cup C)$. Hence, $\pi^*$ is optimal for (\ref{Model}).
\end{description}
}
\end{thm}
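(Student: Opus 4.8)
The plan is to establish (i) and (ii) inside the homogeneous model \eqref{newModel} by a monotone value-iteration argument, and then to read off (iii) by transporting the result back to \eqref{Model} via Theorem~\ref{thm3.1} and Lemma~\ref{lem3.1}. First I would define $V_0\equiv 0$ and, for $(x,k,t)\in (E\setminus C)\times\mathbb{Z}_+\times\mathbb{R}_T$,
\[
V_{n+1}(x,k,t):=\max_{a\in A(x,k)}\mathcal{L}^{a}V_{n}(x,k,t).
\]
Since $\tilde{Q}(\cdot,t|(x,k),a)$ is substochastic and the factor $\mathbf{1}_{E\setminus B_k}(x)$ is built into $\mathcal{L}^{a}$ (see \eqref{operator}), an induction gives $V_n\in\mathcal{M}$ and $0\le V_n\le 1$; monotonicity of each $\mathcal{L}^{a}$ together with $V_1=\max_a\mathcal{L}^{a}V_0\ge V_0$ yields $V_n\uparrow V^{*}$ for some $V^{*}\in\mathcal{M}$. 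By the monotone convergence theorem the integral term of each $\mathcal{L}^{a}$ passes to the limit, and since $A(x,k)$ is finite the finite maximum commutes with this monotone limit; hence $V^{*}=\max_{a\in A(x,k)}\mathcal{L}^{a}V^{*}$, so $V^{*}$ solves \eqref{OE}. Uniqueness of the solution of \eqref{OE} in $\mathcal{M}$ follows from the convolution estimate already used in the proof of Proposition~\ref{th-unique}: if $W_1,W_2$ both solve \eqref{OE}, then, since the $\tilde{Q}((C,k+1),t|\cdot)$ terms cancel and $|\max_a f_a-\max_a g_a|\le\max_a|f_a-g_a|$, the difference $V:=|W_1-W_2|$ obeys $V\le F_\delta^{*(n)}(\cdot)\,\|V\|_\infty$ for every $n$, and $F_\delta^{*(n)}(t)\to 0$ forces $V\equiv 0$.

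Next I would identify $V^{*}$ with $\tilde{G}^{*}$, which is (i). The inequality $V^{*}\ge\tilde{G}^{*}$ is immediate from Proposition~\ref{th-unique}(b): for any $\tilde{\psi}\in\tilde{\Pi}_s$ one has $V^{*}=\max_a\mathcal{L}^{a}V^{*}\ge\mathcal{L}^{\tilde{\psi}}V^{*}$, hence $V^{*}\ge\tilde{G}(\cdot,\cdot,\cdot,\tilde{\psi})$, and taking the supremum over $\tilde{\psi}$ gives the claim. For the converse $V^{*}\le\tilde{G}^{*}$, note that $\tilde{Q}$ advances the clock coordinate by exactly one at every transition, so levels are never revisited; hence, given $(x,k,t)$ and $n$, one can assemble a single policy in $\tilde{\Pi}_s$ that on level $k+j$ ($0\le j\le n-1$) uses a decision function realizing the maximum in the $(n-j)$-th iteration step (and is arbitrary on levels $\ge k+n$), and this policy already attains at least $V_n(x,k,t)$; letting $n\to\infty$ gives $\tilde{G}^{*}(x,k,t)\ge V^{*}(x,k,t)$. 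Thus $\tilde{G}^{*}=V^{*}$ solves \eqref{OE}. For (ii), since each $A(x,k)$ is finite there is a measurable $\tilde{f}^{*}$ with $\tilde{f}^{*}(x,k)\in A(x,k)$ attaining the maximum in \eqref{OE} at $t=T$; then $\tilde{f}^{*}\in\tilde{\Pi}_{sd}$, the bound $\tilde{G}(\cdot,\cdot,T,\tilde{f}^{*})\le\tilde{G}^{*}(\cdot,\cdot,T)$ is trivial, and the reverse bound is obtained from the verification Proposition~\ref{th-unique}(a) applied to $\tilde{f}^{*}$, using again that only finitely many levels $k$ contribute within horizon $T$.

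Part (iii) is then a translation. Take the optimal $\tilde{f}^{*}\in\tilde{\Pi}_{sd}$ from (ii) and define, via Lemma~\ref{lem3.1}(ii), $f^{*}_n(x):=\tilde{f}^{*}(x,n)$ for $x\notin B_n$ and $f^{*}_n(x):=$ any fixed point of $A(x)$ for $x\in B_n$; then $\pi^{*}:=\{f^{*}_n:n\ge 0\}\in\Pi_d$. By Theorem~\ref{thm3.1}(ii) (together with the fact that $\tilde{f}^{*}$ realizes $\tilde{G}^{*}$), for every $x\in E\setminus(B_0\cup C)$,
\[
G(x,T,\pi^{*})=\tilde{G}(x,0,T,\tilde{f}^{*})=\tilde{G}^{*}(x,0,T)=G^{*}(x,T),
\]
which is exactly \eqref{OP}, so $\pi^{*}$ is optimal for \eqref{Model}.

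The step I expect to be the main obstacle is the identification $V^{*}=\tilde{G}^{*}$ and, relatedly, the verification that the greedy selector $\tilde{f}^{*}$ for \eqref{OE} at horizon $T$ is genuinely optimal: this is precisely where the discrete clock coordinate $k$ of \eqref{newModel} must be played off against the continuous remaining time, since the action greedy for \eqref{OE} can a priori change with the remaining time produced by a random sojourn. The resolution should rely on the two structural facts already available — that $\tilde{Q}$ increments $k$ by one, so that a stationary (clock-dependent) policy can encode a backward recursion level by level, and that $F_\delta^{*(n)}(T)\to 0$, so that the recursion effectively terminates after finitely many levels. The remaining ingredients (measurability, boundedness and monotonicity of the $V_n$, the monotone-convergence passage to the limit, and the final translation through Theorem~\ref{thm3.1} and Lemma~\ref{lem3.1}) are routine.
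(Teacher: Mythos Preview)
Your overall route is different from the paper's and is closer in spirit to the paper's \emph{later} Theorem~\ref{thm-W}: you build the value-iteration sequence $V_n\uparrow V^*$, show $V^*$ solves \eqref{OE}, and then identify $V^*$ with $\tilde G^*$. The paper's proof of Theorem~\ref{OE-optimal} is shorter and skips the iteration entirely: it writes out the one-step decomposition of $\tilde G(\cdot,\cdot,T,\tilde\psi)$ (which is Proposition~\ref{th-unique}(c)), bounds the right side by $\max_a\mathcal L^a\tilde G^*(\cdot,\cdot,T)$ using $\tilde G(\cdot,\tilde\psi)\le\tilde G^*$ inside the integral, takes the supremum over $\tilde\psi\in\tilde\Pi_s$ to get $\tilde G^*\le\max_a\mathcal L^a\tilde G^*$, selects a greedy $\tilde f^*$ by finiteness of $A(x,k)$, and invokes Proposition~\ref{th-unique}(a) to close the loop $\tilde G^*\le\tilde G(\cdot,\tilde f^*)\le\tilde G^*$. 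So (i) and (ii) fall out together in a few lines, and the uniqueness argument you give is not used. Part (iii) is handled exactly as you propose.

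Your own worry in the last paragraph is on target and points to the one real gap in the proposal. In the step ``$V^*\le\tilde G^*$'' you claim that, given $(x,k,t)$ and $n$, one can assemble a policy in $\tilde\Pi_s$ that at level $k+j$ uses a decision function realizing the maximum in the $(n-j)$-th iteration. But a member of $\tilde\Pi_s$ is a function of the state $(y,k+j)$ alone, whereas the maximizer in
\[
V_{n-j}(y,k+j,s)=\max_{a\in A(y,k+j)}\mathcal L^{a}V_{n-j-1}(y,k+j,s)
\]
depends on the remaining time $s$, which is random once a sojourn has occurred. Thus the ``assembled'' object is not in $\tilde\Pi_s$ and the claimed inequality $\tilde G(x,k,t,\tilde\psi)\ge V_n(x,k,t)$ does not follow from it. Two repairs are available: either run the assembly with history-dependent policies (histories record the sojourn times) and then pass back to $\tilde\Pi_s$ via the analogue of Proposition~\ref{pro-2} for the model \eqref{newModel}; or, as the paper does in Theorem~\ref{thm-W}, first show $V^*=\max_a\mathcal L^aV^*$ and then use Proposition~\ref{th-unique} with a single greedy $\hat f$ to obtain $V^*=\tilde G(\cdot,\hat f)\le\tilde G^*$, rather than a level-by-level construction.
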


\begin{proof}
First prove (i)-(ii). For all $\tilde{\psi} \in \tilde{\Pi}_s$ and $(x,k) \in (E \setminus C) \times \mathbb{Z}_+$,
	\begin{eqnarray*}
		\tilde{G}(x,k,T,\tilde{\psi})\!\!\!\!&=&\!\!\!\sum_{a \in A(x,k)}\!\!\!\!\tilde{\psi}(a|x,k)[\int_C \!\! \tilde{Q}((dy,k+1),t|(x,k), a)\\
		&&+\!\!\int_0^T \!\! \int_{E \setminus (B_{k+1} \cup C)}\!\!\!\!\!\!\!\!\!\!\tilde{Q}((dy,k+1),du|(x,k),a)\tilde{G}
(y,k+1,T-u,\tilde{\psi})]\\
		&\leq& \max_{a \in A(x,k)}[\int_C \!\! \tilde{Q}((dy,k+1),t|(x,k), a)\\
		&&+\!\!\int_0^T \!\! \int_{E \setminus (B_{k+1} \cup C)}\!\!\!\!\!\!\!\!\!\!\tilde{Q}((dy,k+1),du|(x,k),a)\tilde{G}^*(y,k+1,T-u)]\\
		&=&\max_{a \in A(x,k)}\mathcal{L}^a\tilde{G}^*(x,k,T),
	\end{eqnarray*}
where the last equality is due to the definition of $\tilde{G}^*$. Then, after taking the maximum over $\tilde{\psi} \in \tilde{\Pi}_s$ on the both sides, together with the finiteness of $A(x,k)$ for all $(x,k) \in (E \setminus C) \times \mathbb{Z}_+$, there exists $\tilde{f}^* \in \tilde{\Pi}_{sd}$ such that
\begin{eqnarray}\label{G-leq}
	\tilde{G}^*(x,k,T) \leq \max_{a \in A(x,k)}\mathcal{L}^a\tilde{G}^*(x,k,T)=\mathcal{L}^{\tilde{f}^*}\tilde{G}^*(x,k,T).
\end{eqnarray}
Moreover, by $\tilde{\Pi}_{sd} \subset \tilde{\Pi}_s$ and Proposition \ref{th-unique}(a), we have $\tilde{G}^*(x,k,T) \leq \tilde{G}(x,k,T,\tilde{f}^* )$, which forces that $\tilde{G}^*(x,k,T)=\tilde{G}(x,k,T,\tilde{f}^* )$ since
$\tilde{G}(x,k,T, \tilde{f}^*) \leq \tilde{G}^*(x,k,T)$ is obvious. Therefore, (ii) is proved. (i) follows from  $\tilde{G}^*(x,k,T)=\tilde{G}(x,k,T,\tilde{f}^* )$ and (\ref{G-leq}).

Next prove (iii). Define $\pi^* =\{f^*_n:n\geq 0\}$ as below. For every $x \in E$,
	\begin{eqnarray*}
		f^*_n(x):=\begin{cases}
			\tilde{f}^*(x,n),\quad & \text{if}\ x \notin B_n\\
			g_n(x),\quad & \text{if}\ x \in B_n,
		\end{cases}
	\end{eqnarray*}
	where $\{g_n(x): n \geq 0\}$ is a sequence of actions in $A(x)$ for any $x \in E$, and $\tilde{f}^*$ is given by (ii). By Theorem \ref{thm3.1}(ii), $\pi^* \in \Pi_d$ is an optimal policy for (\ref{Model}).
\end{proof}

To end this subsection, we present several useful properties of the model (\ref{newModel}) as below.

\begin{thm}\label{thm useful}
\rm{ For the model \eqref{newModel}, the following assertions hold.
\begin{description}
\item[(i)] If $B_k \subset B_{k-1}$ for all $k \geq 1$, then, for all $(x,k)\! \in\! S\! \setminus\! \tilde{C}$,
\begin{eqnarray}\label{tilde-G}
\tilde{G}(x,k-1,t,\tilde{\psi}) \leq \tilde{G}(x,k,t,\tilde{\psi}),\ t\! \in\! \mathbb{R}_T\ \text{and} \ \tilde{\psi}\!\in\! \tilde{\Pi}_s,
\end{eqnarray}
and thus
\begin{eqnarray}\label{G*}
\tilde{G}^*(x,k-1,T) \leq \tilde{G}^*(x,k,T).
\end{eqnarray}
			
\item[(ii)] If $B_{k-1} \subset B_k$ for all $k \geq 1$, then, for all $(x,k)\! \in\! S\! \setminus\! \tilde{C}$,
\begin{eqnarray}\label{tilde-G-1}
\tilde{G}(x,k-1,t,\tilde{\psi}) \geq \tilde{G}(x,k,t,\tilde{\psi})\quad \text{for \ all \ } t\! \in\! \mathbb{R}_T\ \text{and} \ \tilde{\psi}\!\in\! \tilde{\Pi}_s,
\end{eqnarray}
and thus
\begin{eqnarray}\label{G*-1}
\tilde{G}^*(x,k-1,T) \geq \tilde{G}^*(x,k,T).
\end{eqnarray}
\item[(iii)] Under the condition of (i) or (ii), if $\lim\limits_{k \rightarrow \infty}B_k=D\ ( \subsetneq E\! \setminus\! C)$, then for every $(x,k)\! \in\! S\! \setminus\! \tilde{C}$, $\tilde{\psi} \in \tilde{\Pi}_s$ and $t \in \mathbb{R}_T$, $\lim\limits_{k \rightarrow \infty}\tilde{G}(x,k,t,\tilde{\psi})$ is the hitting probability to $\tilde{C}$ from state $(x,k)$ with a fixed obstacle set $D$ under policy $\tilde{\psi}$ within $[0,t]$.
\end{description}}
\end{thm}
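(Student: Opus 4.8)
The plan is to prove (i) and (ii) by a monotone-coupling/induction argument on the quantities $\tilde{G}_n(x,k,t,\tilde{\psi})$ defined in \eqref{eq3.12aa}, and then to obtain (iii) as a limiting statement using the monotonicity just established together with the characterization in Proposition \ref{th-unique}. For part (i), fix $\tilde{\psi}\in\tilde{\Pi}_s$ and observe that the operator $\mathcal{L}^a$ in \eqref{operator} depends on the index $k$ only through the indicator $\mathbf{1}_{E\setminus B_k}(x)$ in front and through the obstacle set $B_{k+1}$ appearing in the domain of integration $E\setminus(B_{k+1}\cup C)$. Under the hypothesis $B_k\subset B_{k-1}$ for all $k\ge 1$, both of these are monotone in $k$: a larger index means a smaller cemetery set, hence a larger set $E\setminus B_k$ on which the indicator is $1$, and a larger integration region $E\setminus(B_{k+1}\cup C)$. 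Consequently, for nonnegative $W\in\mathcal{M}$ that is itself nondecreasing in the index (i.e. $W(x,k-1,t)\le W(x,k,t)$), one checks directly that $\mathcal{L}^{\tilde\psi}W(x,k-1,t)\le\mathcal{L}^{\tilde\psi}W(x,k,t)$; here one must be slightly careful that $W$ is extended by $0$ on the obstacle states so that enlarging the integration domain genuinely adds nonnegative mass. Then I would run the iteration $W^{(0)}\equiv 0$, $W^{(n+1)}=\mathcal{L}^{\tilde\psi}W^{(n)}$, which by the contraction estimate in the proof of Proposition \ref{th-unique} converges monotonically to $\tilde{G}(\cdot,\cdot,\cdot,\tilde\psi)$; since the property ``nondecreasing in the index'' is preserved at every step and holds trivially for $W^{(0)}$, it passes to the limit, giving \eqref{tilde-G}. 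Taking the supremum over $\tilde\psi\in\tilde{\Pi}_s$ on both sides yields \eqref{G*}.

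Part (ii) is the mirror image: when $B_{k-1}\subset B_k$, the maps $k\mapsto\mathbf{1}_{E\setminus B_k}(x)$ and $k\mapsto E\setminus(B_{k+1}\cup C)$ are nonincreasing, so $\mathcal{L}^{\tilde\psi}$ preserves the property ``nonincreasing in the index''; the same value-iteration argument then delivers \eqref{tilde-G-1} and, upon taking suprema, \eqref{G*-1}. I expect the only delicate point in (i) and (ii) to be the bookkeeping at the obstacle states — making sure that the convention $W(x,k,t)=0$ for $x\in B_k$ in the definition of $\mathcal{M}$ is used consistently so that the comparison of integrals over $E\setminus(B_{k+1}\cup C)$ versus $E\setminus(B_{k}\cup C)$ really is a comparison of the same integrand against a larger domain; once that is set up the induction is routine.

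For part (iii), assume the setting of (i) (the setting of (ii) being symmetric). By monotonicity, for each fixed $(x,t,\tilde\psi)$ the sequence $k\mapsto\tilde{G}(x,k,t,\tilde\psi)$ is nondecreasing and bounded above by $1$, hence convergent; call the limit $\tilde{G}_\infty(x,t,\tilde\psi)$. The claim is that $\tilde{G}_\infty$ equals the hitting probability of $\tilde{C}$ from $(x,\cdot)$ in the \emph{time-homogeneous} reach-avoid model whose single fixed obstacle set is $D=\lim_k B_k$. To see this I would introduce the operator $\mathcal{L}^{a}_{D}$ obtained from \eqref{operator} by replacing every occurrence of $B_{k}$ and $B_{k+1}$ by the fixed set $D$ (and dropping the $k$-dependence), and verify that $\mathcal{L}^{\tilde\psi}_{D}$ is exactly the one-step operator for the fixed-obstacle model; its unique fixed point in the appropriate function class is, by Proposition \ref{th-unique} applied to that model, the desired hitting probability. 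Then I would pass to the limit $k\to\infty$ in the fixed-point identity $\tilde{G}(x,k,t,\tilde\psi)=\mathcal{L}^{\tilde\psi}\tilde{G}(x,k,t,\tilde\psi)$, using monotone convergence to move the limit inside the integral over $E\setminus(B_{k+1}\cup C)$ (the domains increase to $E\setminus(D\cup C)$ under hypothesis (i)) and the continuity of $\mathbf{1}_{E\setminus B_k}(x)\to\mathbf{1}_{E\setminus D}(x)$ — this last convergence needs $\lim_k B_k=D$ interpreted so that the indicators converge pointwise, which is where the precise meaning of the set-limit hypothesis enters. The resulting identity $\tilde{G}_\infty=\mathcal{L}^{\tilde\psi}_{D}\tilde{G}_\infty$ together with uniqueness pins down $\tilde{G}_\infty$ as the fixed-obstacle hitting probability. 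The main obstacle here is the interchange of limit and integral at the ``moving boundary'' $E\setminus(B_{k+1}\cup C)$; the monotonicity from (i)/(ii) is exactly what makes this a legitimate application of monotone convergence rather than a more delicate dominated-convergence argument.
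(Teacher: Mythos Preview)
Your approach differs from the paper's: you argue by value iteration on the operator $\mathcal{L}^{\tilde{\psi}}$, whereas the paper introduces an auxiliary index-shifted model (with $\bar{B}_n=B_{n+1}$ and $\bar{\psi}(\cdot\,|x,k)=\tilde{\psi}(\cdot\,|x,k+1)$) and argues by coupling that the shifted process started from $(x,k)$ has the same law as the original started from $(x,k+1)$, then compares obstacle-hitting times using $\bar{B}\subset\tilde{B}$. Part (iii) in the paper is also handled differently, by convergence of the kernels $\tilde{Q}$ rather than by passing to the limit in the fixed-point equation.

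There is, however, a genuine gap in your argument for (i) and (ii). You claim that $\mathcal{L}^{\tilde{\psi}}$ preserves the property ``nondecreasing in $k$'', but this overlooks that a policy $\tilde{\psi}\in\tilde{\Pi}_s$ is a kernel on the \emph{full} state $(x,k)$ and may depend on $k$. In the comparison
\[
\mathcal{L}^{\tilde{\psi}}W(x,k-1,t)=\sum_{a}\tilde{\psi}(a\,|x,k-1)\,\mathcal{L}^{a}W(x,k-1,t),
\qquad
\mathcal{L}^{\tilde{\psi}}W(x,k,t)=\sum_{a}\tilde{\psi}(a\,|x,k)\,\mathcal{L}^{a}W(x,k,t),
\]
even if each $\mathcal{L}^{a}W(x,k-1,t)\le\mathcal{L}^{a}W(x,k,t)$ (which your domain-enlargement reasoning does establish for every fixed $a$), the two sides are averaged with \emph{different} weights and need not be ordered. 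Concretely, take $B_0\supsetneq B_1=B_2=\cdots$ and a state $x\notin B_0$ with two actions $a_1,a_2\in A(x)$, where $a_1$ sends the process to $C$ quickly while $a_2$ causes a long detour through $E\setminus(B_1\cup C)$; the policy with $\tilde{\psi}(\cdot\,|x,0)=\delta_{a_1}$ and $\tilde{\psi}(\cdot\,|x,1)=\delta_{a_2}$ gives $\tilde{G}(x,0,t,\tilde{\psi})>\tilde{G}(x,1,t,\tilde{\psi})$ for intermediate $t$, so the per-policy inequality \eqref{tilde-G} fails for this $\tilde{\psi}$. Your induction therefore breaks, and since you deduce \eqref{G*} from \eqref{tilde-G} by taking suprema, that conclusion is not established by your argument either. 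The monotonicity-preservation idea \emph{does} work if applied instead to the Bellman operator $W\mapsto\max_{a}\mathcal{L}^{a}W$ (where the weight issue disappears), which would yield \eqref{G*} directly; but as written your route passes through the per-policy inequality, and that step does not hold.
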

\begin{proof}
\rm{ (i) Consider another equivalent model as below:
\begin{eqnarray}\label{newnewModel}
\{S=E\!\times\! \mathbb{Z}_{+}, \bar{B}, \tilde{C}, (\bar{A}(x,k)\subset \bar{A}: (x,k) \in S), \bar{Q}(\cdot,\cdot| (x,k), a)\},
\end{eqnarray}
where $\bar{B}=\cup_{n=0}^{\infty}\bar{B}_n\times\{n\}$ with $\bar{B}_n=B_{n+1}$, $\bar{A}=\cup_{(x,k)\in S}\bar{A}(x,k)$ with $\bar{A}(x,k)=A(x,k+1)$ and $\bar{Q}(\cdot,\cdot|(x,k),a)=\tilde{Q}(\cdot,\cdot|(x,k+1),a)$.
		
Let $\bar{Y}_t=(\bar{X}_t,\bar{N}_t)$ be the process determined by (\ref{newnewModel}). For $\tilde{\psi}\in\tilde{\Pi}_s$, define $\bar{\psi}$ by $\bar{\psi}(\cdot |x,k)=\tilde{\psi}(\cdot |x,k+1)$.
It is easy to see that the evolution of $\bar{Y}_t$ under $P^{\bar{\psi}}_{(x,k)}$ is same as the evolution of $\tilde{Y}_t$ under $P^{\tilde{\psi}}_{(x,k+1)}$. Therefore, noting $\bar{B}_n=B_{n+1}$, we have
\begin{eqnarray}\label{G-B}
P^{\bar{\psi}}_{(x,k)}(\bar{\tau}_{\tilde{C}}\leq \bar{\tau}_{\bar{B}}\wedge t)
=P^{\tilde{\psi}}_{(x,k+1)}(\tilde{\tau}_{\tilde{C}}\leq \tilde{\tau}_{\tilde{B}}\wedge t)=\tilde{G}(x,k+1,t,\tilde{\psi}),
\end{eqnarray}
where $\bar{\tau}_{\tilde{C}}=\inf\{t\geq 0:\bar{Y}_t\in \tilde{C}\}$ and $\bar{\tau}_{\bar{B}}=\inf\{t\geq 0:\bar{Y}_t\in \bar{B}\}$.
However, since $\bar{B}\subset \tilde{B}$ (from $B_k \subset B_{k-1}$ for all $k \geq 1$), we see that if $\bar{Y}_0=(x,k)\in S\!\setminus\! \tilde{B}$, then $\bar{\tau}_{\bar{B}}\geq \bar{\tau}_{\tilde{B}}$, and hence by (\ref{G-B}) and $\tilde{Q}(\cdot,\cdot|(x,k),a)=\bar{Q}(\cdot,\cdot|(x,k+1),a)=Q(\cdot,\cdot |x,a)$ for $(x,k)\in S\!\setminus\! \tilde{B}$, we have
\begin{eqnarray*}	\tilde{G}(x,k,t,\tilde{\psi})&=&P^{\tilde{\psi}}_{(x,k)}(\tilde{\tau}_
{\tilde{C}}\leq \tilde{\tau}_{\tilde{B}}\wedge t)=P^{\bar{\psi}}_{(x,k)}(\bar{\tau}_{\tilde{C}}\leq \bar{\tau}_{\tilde{B}}\wedge t)\\
&\leq& P^{\bar{\psi}}_{(x,k)}(\bar{\tau}_{\tilde{C}}\leq \bar{\tau}_{\bar{B}}\wedge t)\ \ (\bar{B}\ \text{replaced\ with}\ \tilde{B}\ \text{for}\ \bar{Y}_t)\\
&=&\tilde{G}(x,k+1,t,\tilde{\psi}).
\end{eqnarray*}
Therefore, by the arbitrary of $\tilde{\psi}$ and $t \in \mathbb{R}_T$, (\ref{G*}) holds.
		
(ii)\ We modified the equivalent model  (\ref{newnewModel}) as below:
\begin{eqnarray}\label{newnewModel-1}
\{S=E\!\times\! \mathbb{Z}_{+}, \hat{B}, \tilde{C}, (\hat{A}(x,k)\subset \hat{A}: (x,k) \in S), \hat{Q}(\cdot,\cdot| (x,k), a)\},
\end{eqnarray}
where $\hat{B}=\cup_{n=0}^{\infty}\hat{B}_n\times\{n\}$ with $\hat{B}_0=\emptyset, \hat{B}_n=B_{n-1}\ (n\geq 1)$, $\hat{A}=\cup_{(x,k)\in S}\hat{A}(x,k)$ with $\hat{A}(x,0)=A(x)$, $\hat{A}(x,k)=A(x,k-1)\ (k\geq 1)$ and $\hat{Q}(\cdot,\cdot|(x,0),a)=Q(\cdot,\cdot|x,a)$, $\hat{Q}(\cdot,\cdot|(x,k),a)=\tilde{Q}(\cdot,\cdot|(x,k-1),a)\ (k\geq 1)$.
		
Let $\hat{Y}_t=(\hat{X}_t,\hat{N}_t)$ be the process determined by (\ref{newnewModel-1}). For $\tilde{\psi}$ given by (\ref{eq3.2a}), define $\hat{\psi}$ by $\hat{\psi}(\cdot |x,0)=\psi(\cdot|x)$ and $\hat{\psi}(\cdot |x,k)=\tilde{\psi}(\cdot |x,k-1)\ (k\geq 1)$.
It is easy to see that the evolution of $\hat{Y}_t$ under $P^{\hat{\psi}}_{(x,k+1)}$ is same as the evolution of $\tilde{Y}_t$ under $P^{\tilde{\psi}}_{(x,k)}$. Therefore, noting $\hat{B}_n=B_{n-1}$, we have
\begin{eqnarray}\label{G-B1}
P^{\hat{\psi}}_{(x,k+1)}(\hat{\tau}_{\tilde{C}}\leq \hat{\tau}_{\hat{B}}\wedge t)
=P^{\tilde{\psi}}_{(x,k)}(\tilde{\tau}_{\tilde{C}}\leq \tilde{\tau}_{\tilde{B}}\wedge t)=\tilde{G}(x,k,t,\tilde{\psi}),
\end{eqnarray}
where $\hat{\tau}_{\tilde{C}}=\inf\{t\geq 0:\hat{Y}_t\in \tilde{C}\}$ and $\hat{\tau}_{\hat{B}}=\inf\{t\geq 0:\hat{Y}_t\in \hat{B}\}$. However, since $\hat{B}\subset\tilde{B}$ (from $B_{k-1} \subset B_k$ for all $k \geq 1$), if $\hat{Y}_0=(x,k+1)\in S\!\setminus\! \tilde{B}$, then $\hat{\tau}_{\tilde{B}}\leq \hat{\tau}_{\hat{B}}$, and hence by (\ref{G-B1}) and $\hat{Q}(\cdot,\cdot|(x,k+1),a)=\tilde{Q}(\cdot,\cdot|(x,k),a)=Q(\cdot,\cdot |x,a)$ for $(x,k+1)\in S\!\setminus\! \tilde{B}$, we have
\begin{eqnarray*}
\tilde{G}(x,k,t,\tilde{\pi})
&=&P^{\hat{\psi}}_{(x,k+1)}(\hat{\tau}_{\tilde{C}}\leq \hat{\tau}_{\hat{B}}\wedge t)\\
&\geq & P^{\hat{\psi}}_{(x,k+1)}(\hat{\tau}_{\tilde{C}}\leq \hat{\tau}_{\tilde{B}}\wedge t)\ \ (\hat{B}\  \text{replaced\ with}\ \tilde{B}\ \text{for}\ \hat{Y}_t)\\
&=& P^{\tilde{\psi}}_{(x,k+1)}(\tilde{\tau}_{\tilde{C}}\leq \tilde{\tau}_{\tilde{B}}\wedge t)=\tilde{G}(x,k+1,t,\tilde{\psi}).
\end{eqnarray*}
Therefore, by the arbitrary of $\tilde{\psi}$ and $t \in \mathbb{R}_T$, (\ref{G*-1}) holds.
		
(iii) Obviously, when $E$ is finite, from (\ref{tilde-G}), $\tilde{G}(x,k,t,\tilde{\psi})$ has no relationship with the obstacles $B_0,\ B_1, \cdots,\ B_{k-1}$, and thus by $\lim\limits_{k \rightarrow \infty}B_k=D\ ( \subsetneq E\! \setminus\! C)$, we know that there exists $n_0\geq 0$ such that $B_k=D\ (k\geq n_0)$. Hence, for any $x\in E\setminus (D\cup C)$ and $k\geq n_0$,
$\tilde{G}(x,k,t,\tilde{\psi})=\tilde{G}(x,n_0,t,\tilde{\psi})$,
which is the hitting probability to $\tilde{C}$ from state $x$ with a fixed obstacle set $D$ under policy $\tilde{\pi}$ within $[0,t]$. When $E$ is a Borel set, if $\lim\limits_{k \rightarrow \infty}B_k=D\ ( \subsetneq E\! \setminus\! C)$, then by (\ref{tilde-Q}), we have
\begin{eqnarray*}
&&\lim\limits_{k \rightarrow \infty}|\tilde{Q}((B_{k+1}, k+1), t| (x,k), a)-\tilde{Q}((D, k+1), t| (x,k), a)|\\
&=&\lim\limits_{k \rightarrow \infty}|Q(B_{k+1}, t|x,a)-Q(D,t|x,a)|=0
\end{eqnarray*}
for all $x \in E\!\setminus\! (D\cup C)$, $t \in \mathbb{R}_T$ and $a \in A(x)$.
Then, by (\ref{eq3.7a}), we know that for all $x\in E \!\setminus\! (D\cup C)$, $\lim\limits_{k \rightarrow \infty}\tilde{G}(x,k,t,\tilde{\psi})$ is the hitting probability to $\tilde{C}$ from state $x$ with a fixed obstacle set $D$ under policy $\tilde{\pi}$ within $[0,t]$.}
\end{proof}

\begin{rem}\label{rem-2}
\rm{	By Theorem \ref{thm useful}, when the obstacle set $B_k$ has monotonicity respect to $k$, $\tilde{G}^*$ also has the monotonicity respect to $k$. Therefore, if $B_k \subset B_{k-1}$, then when the process is in a state where it is transferred to neither $C$ nor $B_k$, the probability of hitting the target $C$ is greater than the probability of hitting the target $C$ at the initial time of $x$. Similar property holds in the case that $B_{k-1} \subset B_k$.
}
\end{rem}

\subsection{Improved value-type algorithm}

In this subsection, we mainly present an improved value-type algorithm of computing the maximal reach-avoid probability and its $\epsilon$-optimal policy.

We now define that for $(x,k,t)\in (E \setminus C) \times \mathbb{Z}_+ \times \mathbb{R}_T$,
\begin{eqnarray}\label{W-2}
\begin{cases}
W_1(x,k,t):=\max\limits_{a\in A(x,k)}\tilde{Q}((C,k+1),t|(x,k),a)\\
W_{n+1}(x,k,t):=\max\limits_{a\in A(x,k)}\mathcal{L}^{a}W_n(x,k,t),\quad n \geq 1,
\end{cases}
\end{eqnarray}
and present the characteristic of the above sequence $\{W_n(x,k,t): n \geq 1\}$, which is significant for analyzing  $\tilde{G}^*(x,k,t)$.

\begin{thm}\label{thm-W}
\rm{Suppose that (\ref{As-Q}) holds. Let $\{W_n(x,k,t): n \geq 1\}$ be defined in (\ref{W-2}). Then, we have the following assertions. For all $(x,k,t) \in (E \setminus C) \times \mathbb{Z}_+  \times \mathbb{R}_T $,
\begin{description}
\item[(i)]  $W_n(x,k,t)$ is nondecreasing on $t \in \mathbb{R}_T$ for all $n \geq 1$;

\item[(ii)]  $W_n(x,k,t)\leq W_{n+1}(x,k,t)$ for all $n \geq 1$;

\item[(iii)] $\lim\limits_{n \rightarrow \infty}W_n(x,k,t)=\tilde{G}^*(x,k,t)$.
\end{description}
}
\end{thm}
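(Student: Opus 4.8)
The plan is to establish (i) and (ii) by induction on $n$, to observe that these force the pointwise (increasing) limit $W_{\infty}:=\lim_{n\to\infty}W_n=\sup_{n\ge 1}W_n$ to lie in $\mathcal{M}$ and to solve the optimality equation \eqref{OE}, and then to sandwich $W_{\infty}$ against $\tilde{G}^*$ from both sides using Theorem \ref{OE-optimal} and Proposition \ref{th-unique}. Note at the outset that $W_1(x,k,t)=\max_{a\in A(x,k)}\tilde{Q}((C,k+1),t|(x,k),a)$ equals $\max_{a}\mathcal{L}^aW$ evaluated at $W\equiv 0$, so $\{W_n\}$ is exactly the value-iteration sequence of the Bellman operator $W\mapsto\max_{a}\mathcal{L}^aW$ started from the zero function.

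For (i) I would induct on $n$. For $n=1$, $t\mapsto W_1(x,k,t)$ is nondecreasing because $t\mapsto Q(C,t|x,a)$ is nondecreasing (property (i) of the kernel $Q$) and the maximum over the finite set $A(x,k)$ preserves monotonicity. For the inductive step I would inspect the explicit form of $\mathcal{L}^aW_n$ in \eqref{operator}: as $t$ increases, the term $Q(C,t|x,a)$ increases, the domain $[0,t]$ of the inner integral enlarges while its integrand is nonnegative, and $W_n(y,k+1,t-u)$ increases by the induction hypothesis; hence $t\mapsto\mathcal{L}^aW_n(x,k,t)$ is nondecreasing and so is $W_{n+1}=\max_{a}\mathcal{L}^aW_n$.

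For (ii), together with the a priori bound $0\le W_n\le 1$, both are short inductions. Monotonicity of $\mathcal{L}^a$ (immediate from \eqref{operator} since $\tilde{Q}\ge 0$), together with $\mathcal{L}^aW(x,k,t)\ge\tilde{Q}((C,k+1),t|(x,k),a)$ whenever $W\ge 0$, gives $W_2=\max_{a}\mathcal{L}^aW_1\ge\max_{a}\tilde{Q}((C,k+1),t|(x,k),a)=W_1$; and if $W_n\le W_{n+1}$ then $\mathcal{L}^aW_n\le\mathcal{L}^aW_{n+1}$ for every $a$, hence $W_{n+1}\le W_{n+2}$. The bound $W_n\le 1$ follows inductively from $Q(C,t|x,a)+Q(E\setminus(B_{k+1}\cup C),t|x,a)\le Q(E,t|x,a)\le 1$. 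Since moreover each $W_n$ is Borel (a finite maximum of functions $\mathcal{L}^aW_{n-1}$, measurability being preserved by the operator) and $W_n(x,k,t)=0$ for $x\in B_k$ because of the factor $\mathbf{1}_{E\setminus B_k}(x)$, we have $W_n\in\mathcal{M}$; hence $W_{\infty}$ exists, takes values in $[0,1]$, vanishes on $\{x\in B_k\}$ and is Borel, i.e.\ $W_{\infty}\in\mathcal{M}$. Passing to the limit in $W_{n+1}=\max_{a}\mathcal{L}^aW_n$ — legitimate since $A(x,k)$ is finite and $\int_0^t\!\int\tilde{Q}(\cdot)\,W_n\to\int_0^t\!\int\tilde{Q}(\cdot)\,W_{\infty}$ by monotone convergence — then shows $W_{\infty}=\max_{a}\mathcal{L}^aW_{\infty}$, i.e.\ $W_{\infty}$ solves \eqref{OE}.

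Finally, for (iii) it remains to prove $W_{\infty}=\tilde{G}^*$. For $W_{\infty}\le\tilde{G}^*$, I show $W_n\le\tilde{G}^*$ for all $n$ by induction: for $n=1$, fixing $a\in A(x,k)$ with $x\notin B_k$ and taking $\tilde{\psi}\in\tilde{\Pi}_{sd}\subset\tilde{\Pi}_s$ with $\tilde{\psi}(a|x,k)=1$ gives $\tilde{G}(x,k,t,\tilde{\psi})\ge P^{\tilde{\psi}}_{(x,k)}(\tau_{\tilde{C}}=\tilde{\sigma}_1\le t)=Q(C,t|x,a)$ (and for $x\in B_k$ both sides are $0$), so $\tilde{G}^*(x,k,t)\ge W_1(x,k,t)$; and if $W_n\le\tilde{G}^*$, then monotonicity of $\mathcal{L}^a$ and Theorem \ref{OE-optimal}(i) give $W_{n+1}=\max_{a}\mathcal{L}^aW_n\le\max_{a}\mathcal{L}^a\tilde{G}^*=\tilde{G}^*$; letting $n\to\infty$ yields $W_{\infty}\le\tilde{G}^*$. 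For $W_{\infty}\ge\tilde{G}^*$, since $W_{\infty}$ solves \eqref{OE}, for every $\tilde{\psi}\in\tilde{\Pi}_s$ we have $W_{\infty}=\max_{a}\mathcal{L}^aW_{\infty}\ge\mathcal{L}^{\tilde{\psi}}W_{\infty}$ (as $\mathcal{L}^{\tilde{\psi}}$ is an average of the $\mathcal{L}^a$), so Proposition \ref{th-unique}(b) gives $W_{\infty}\ge\tilde{G}(\cdot,\cdot,\cdot,\tilde{\psi})$, and taking the supremum over $\tilde{\psi}\in\tilde{\Pi}_s$ gives $W_{\infty}\ge\tilde{G}^*$. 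Combining the two inequalities proves (iii). The inductions and the monotone-convergence interchange are routine; I expect the main technical obstacle to be the bookkeeping that keeps $W_n$ (hence $W_{\infty}$) inside $\mathcal{M}$ — boundedness by $1$, vanishing on the obstacles, and Borel measurability under the finite-maximum/integral operator, the last being exactly what makes Proposition \ref{th-unique}(b) applicable — together with the correct treatment of the one-step base case $W_1\le\tilde{G}^*$.
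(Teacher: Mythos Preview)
Your argument is correct. Parts (i) and (ii) follow the same inductive route as the paper, only spelled out in more detail. For part (iii), however, you take a genuinely different and somewhat cleaner path than the paper. The paper first extracts, via a finite-action subsequence argument, a deterministic selector $\hat{f}$ such that $\mathcal{L}^{\hat{f}}W_{n_l}=\max_a\mathcal{L}^aW_{n_l}$ along a subsequence, deduces $W^*=\mathcal{L}^{\hat{f}}W^*$, and then invokes Proposition~\ref{th-unique}(c) to obtain $W^*=\tilde{G}(\cdot,\cdot,\cdot,\hat{f})\le\tilde{G}^*$; for the reverse inequality it runs a direct probabilistic induction showing $P^{\tilde{f}}_{(x,k)}(\tau_{\tilde{C}}\le\tilde{\sigma}_n\wedge t)\le W_n$ for every $\tilde{f}\in\tilde{\Pi}_{sd}$, and then appeals (implicitly) to Theorem~\ref{OE-optimal}(ii) to pass from the supremum over $\tilde{\Pi}_{sd}$ to $\tilde{G}^*$. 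You instead leverage the already-established Theorem~\ref{OE-optimal}(i) to get $W_n\le\tilde{G}^*$ by monotonicity of the Bellman operator, and Proposition~\ref{th-unique}(b) applied to every $\tilde{\psi}\in\tilde{\Pi}_s$ to get $W_\infty\ge\tilde{G}^*$. Your route is more economical and avoids the subsequence extraction; the paper's route has the minor advantage of producing, as a byproduct, the explicit policy $\hat{f}$ with $W^*=\tilde{G}(\cdot,\cdot,\cdot,\hat{f})$, which is quoted verbatim in the proof of Proposition~\ref{thm-convergence}. There is no circularity in your use of Theorem~\ref{OE-optimal}, since that theorem is proved earlier and independently of Theorem~\ref{thm-W}.
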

\begin{proof}
By the definition of $W_n(\cdot,\cdot,\cdot)$, (i) is obvious. As for (ii), it is easy to get that $W_1(\cdot,\cdot,\cdot) \leq W_2(\cdot,\cdot,\cdot)$, and by mathematical induction, we have $W_{n}(\cdot,\cdot,\cdot) \leq W_{n+1}(\cdot,\cdot,\cdot)$ for all $n \geq 1$. Finally we prove (iii). Obviously, $W_n(x,k,t) \in [0,1]$ for all $(x,k,t) \in (E \setminus C) \times \mathbb{Z}_+ \times \mathbb{R}_T$. It follows from the monotone convergence theorem and (ii), that $W^*(x,k,t):=\lim\limits_{n \rightarrow \infty}W_n(x,k,t)$ exists for every $(x,k,t) \in (E \setminus C) \times \mathbb{Z}_+ \times \mathbb{R}_T$. By the finiteness of $A(x,k)$, there exists an action $a^{*(n)}_{x,k} \in A(x,k)\ (n \geq 0)$ such that $\mathcal{L}^{a^{*(n)}_{x,k}}W_n(x,k,t)=\max\limits_{a \in A(x,k)}\mathcal{L}^aW_n(x,k,t)$.
Since $A(x,k)$ is finite and $a^{*(n)}_{x,k}\in A(x,k)\ (n \geq 0)$, there exists an action $a^{*}_{x,k}$ and a sub-sequence $\{n_l: l \geq 0\}$ such that $a^{*(n_l)}_{x,k}=a^*_{x,k}$. Hence, $\mathcal{L}^{a^{*}_{x,k}}W_{n_l}(x,k,t)=\max\limits_{a \in A(x,k)}\mathcal{L}^aW_{n_l}(x,k,t)$ for all $l \geq 0$.
Moreover, we easily get that for all $(x,k) \in (E \setminus C) \times \mathbb{Z}_+$, $\lim\limits_{l \rightarrow \infty}\mathcal{L}^{a^{*}_{x,k}}W_{n_l}(x,k,t)=\mathcal{L}^{a^*_{x,k}}W^*(x,k,t)$. Then, take $\hat{f} \in  \tilde{\Pi}_{sd}$ such that $\hat{f}(x,k)=a^*_{x,k}$ for all $(x,k) \in (E \setminus C) \times \mathbb{Z}_+$. Then, for all $(x,k) \in (E\! \setminus\! C)\! \times \!\mathbb{Z}_+$, $\mathcal{L}^{\hat{f}}W^*(x,k,t)=\mathcal{L}^{a^*_{x,k}}W^*(x,k,t)=\max\limits_{a \in A(x,k)}\mathcal{L}^{a}W^*(x,k,t)$.
Here we have used the fact that $\lim\limits_{l\to \infty}\max\limits_{a \in A(x,k)}\mathcal{L}^{a}W_{n_l}(x,k,t)=\max\limits_{a \in A(x,k)}\mathcal{L}^{a}W^*(x,k,t)$. It follows from (\ref{W-2}) that $
W^*(x,k,t)=\mathcal{L}^{\hat{f}}W^*(x,k,t)$. By Proposition~\ref{th-unique}(c), we know that $\tilde{G}(x,k,t,\hat{f})=\mathcal{L}^{\hat{f}}\tilde{G}(x,k,t,\hat{f})$.
Hence, by Proposition~\ref{th-unique}, we have $W^*(x,k,t)=\tilde{G}(x,k,t,\hat{f})$.

On the other hand, we can prove that for any $\tilde{f}\in \tilde{\Pi}_{sd}$,
\begin{eqnarray}\label{eq4.13}
P^{\tilde{f}}_{(x,k)}(\tau_{\tilde{C}}\leq\tilde{\sigma}_{n}\wedge t)\leq W_n(x,k,t),\ \ n\geq 1.
\end{eqnarray}
Indeed,
$P^{\tilde{f}}_{(x,k)}(\tau_{\tilde{C}}\leq\tilde{\sigma}_{1}\wedge t)=\tilde{Q}((C,k\!\!+\!\!1),t|(x,k),\tilde{f}(x,k))\leq W_1(x,k,t)
$ and for $n\geq 1$, by Markov property and mathematical induction,
\begin{eqnarray*}
&& P^{\tilde{f}}_{(x,k)}(\tau_{\tilde{C}}\leq\tilde{\sigma}_{n+1}\wedge t)\\
&=&\tilde{Q}((C,k\!\!+\!\!1),t|(x,k),\tilde{f}(x,k))\\
&&+\int_0^t\int_{E\setminus (B_{k\!+\!1}\cup C)}\!\!\tilde{Q}((dy,k\!+\!1),du|(x,k),\tilde{f}(x,k))
P^{\tilde{f}}_{(y,k\!+\!1)}(\tau_{\tilde{C}}\leq\tilde{\sigma}_{n}\wedge (t-u))\\
&\leq&\tilde{Q}((C,k\!\!+\!\!1),t|(x,k),\tilde{f}(x,k))\\
&&+\int_0^t\int_{E\setminus (B_{k\!+\!1}\cup C)}\!\!\tilde{Q}((dy,k\!+\!1),du|(x,k),\tilde{f}(x,k))
W_n(y,k+1,t-u)\leq W_{n+1}(x,k,t).
\end{eqnarray*}
Hence, (\ref{eq4.13}) holds. Letting $n\to \infty$ in (\ref{eq4.13}) and noting $\tilde{\sigma}_n\uparrow \infty$ under $\tilde{f}$, yield that $\tilde{G}(x,k,t,\tilde{f})\leq W^*(x,k,t)\ (\tilde{f}\in \tilde{\Pi}_{sd})$.
Taking maximum over $\tilde{f}\in \tilde{\Pi}_{sd}$ yields $W^*(x,k,t)=\tilde{G}^*(x,k,t)$.
\end{proof}
From Theorem~\ref{thm-W}, we can consider to iterate the sequence $\{W_n(x,k,t): n \geq 1\}$ for all $(x,k,t)\in (E \setminus C) \times \mathbb{Z}_+ \times \mathbb{R}_T$, and then obtain the approximation of the maximal reach-avoid probability. To ensure the convergence of the following improved value-type algorithm, we present Proposition \ref{thm-convergence} as below.

\begin{pro}\label{thm-convergence}
{\rm Suppose that (\ref{As-Q}) holds. Let $\{W_n(x,k,t): n \geq 1\}$ be defined by (\ref{W-2}), $\beta:=(1-\epsilon_0^{\tilde{K}})^{\frac{1}{\tilde{K}}}$, where $\tilde{K}$ is given in the proof of Proposition~\ref{th-unique}.
	\begin{description}
\item[(a)] For given $t\in \mathbb{R}_T$ and any sufficiently small $\rho>0$, take $\tilde{l}:=\tilde{K}+\log_{\beta}\rho$. Then,
\begin{eqnarray*}
	0 \leq G^*(x,k,t)-W_{n_{\tilde{l}}}(x,k,t) \leq \rho \ \ \text{for  all }  (x,k) \in (E\! \setminus\! C)\! \times\! \mathbb{Z}_+.
\end{eqnarray*}
	
\item[(b)] For given $t\in \mathbb{R}_T$ any $\epsilon>0$, there exists an integer $n_{\tilde{l}}$ and a policy $\tilde{f}^{*(t)} \in \tilde{\Pi}_{sd}$ such that $W_{n_{\tilde{l}}+1}(x,k,t)=\mathcal{L}^{\tilde{f}^{*(t)}}
    W_{n_{\tilde{l}}}(x,k,t)$ for  all  $(x,k) \in (E \setminus C) \times \mathbb{Z}_+$, and $\tilde{f}^{*(t)}$ is an $\epsilon$-optimal policy for horizon $t$.
\end{description}
}
\end{pro}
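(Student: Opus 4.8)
The plan is to read both parts off the monotone approximation $W_{n}\uparrow\tilde G^{*}$ established in Theorem~\ref{thm-W}, combined with the convolution estimate already used inside the proof of Proposition~\ref{th-unique}. (Throughout I write $\tilde G^{*}$ for the quantity denoted $G^{*}(x,k,t)$ in the statement, i.e.\ the optimal reach-avoid probability of the model \eqref{newModel}.) For part (a), the lower bound $0\le\tilde G^{*}(x,k,t)-W_{n_{\tilde l}}(x,k,t)$ is immediate from Theorem~\ref{thm-W}(ii)--(iii). For the upper bound I would fix, for the horizon $t$, a deterministic stationary optimal policy $\tilde f^{*}\in\tilde\Pi_{sd}$ with $\tilde G(x,k,t,\tilde f^{*})=\tilde G^{*}(x,k,t)$ (Theorem~\ref{OE-optimal} with $t$ in the role of $T$; the case $t=0$ is trivial), use $\tilde G(x,k,t,\tilde f^{*})=P^{\tilde f^{*}}_{(x,k)}(\tau_{\tilde C}\le t)$ together with $W_{n}(x,k,t)\ge P^{\tilde f^{*}}_{(x,k)}(\tau_{\tilde C}\le\tilde\sigma_{n}\wedge t)$ from \eqref{eq4.13}, and conclude
\[
0\le\tilde G^{*}(x,k,t)-W_{n}(x,k,t)\le P^{\tilde f^{*}}_{(x,k)}(\tilde\sigma_{n}<\tau_{\tilde C}\le t)\le P^{\tilde f^{*}}_{(x,k)}(\tilde\sigma_{n}\le t).
\]

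To finish part (a) I would bound the last probability. Since \eqref{As-Q} gives $\tilde Q(S,u|(y,j),a)\le F_{\delta}(u)$ for every $(y,j,a)$, with $F_{\delta}$ and $\tilde K$ as in the proof of Proposition~\ref{th-unique}, an induction on $n$ of the same type as in that proof (condition on the first sojourn time and use that convolution respects stochastic domination) yields $P^{\tilde f^{*}}_{(x,k)}(\tilde\sigma_{n}\le t)\le F_{\delta}^{*(n)}(t)\le F_{\delta}^{*(n)}(T)$, and then Theorem~1 of \cite{J86} gives $F_{\delta}^{*(n)}(T)\le(1-\epsilon_{0}^{\tilde K})^{\lfloor n/\tilde K\rfloor}$ for $n>\tilde K$. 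It then remains to calibrate $n_{\tilde l}$: with $\tilde l=\tilde K+\log_{\beta}\rho$ (which exceeds $\tilde K$ once $\rho<1$, the meaning of ``sufficiently small''), and $n_{\tilde l}:=\lceil\tilde l\rceil>\tilde K$, one checks $\lfloor n_{\tilde l}/\tilde K\rfloor\ge n_{\tilde l}/\tilde K-1\ge\tilde l/\tilde K-1=\tfrac1{\tilde K}\log_{\beta}\rho$, whence $(1-\epsilon_{0}^{\tilde K})^{\lfloor n_{\tilde l}/\tilde K\rfloor}\le(1-\epsilon_{0}^{\tilde K})^{\frac1{\tilde K}\log_{\beta}\rho}=\beta^{\log_{\beta}\rho}=\rho$; for $x\in B_{k}$ both sides vanish, so the estimate is trivial there.

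For part (b), given $\epsilon>0$ I would apply (a) with $\rho:=\min\{\epsilon/2,1/2\}$ to fix $m:=n_{\tilde l}$, and then, using finiteness of $A(x,k)$ together with a standard measurable selection, pick $\tilde f^{*(t)}\in\tilde\Pi_{sd}$ attaining the maximum in \eqref{W-2}, i.e.\ $W_{m+1}(x,k,t)=\mathcal L^{\tilde f^{*(t)}}W_{m}(x,k,t)$ for all $(x,k)$. The key observation is that each $\mathcal L^{a}$ is monotone on $\mathcal M$ (the integral in \eqref{operator} is against the nonnegative kernel $Q(dy,du|x,a)$): with $W_{m}\le W_{m+1}$ from Theorem~\ref{thm-W}(ii) this gives $W_{m+1}=\mathcal L^{\tilde f^{*(t)}}W_{m}\le\mathcal L^{\tilde f^{*(t)}}W_{m+1}$, so $W_{m+1}$ is a sub-solution for $\tilde f^{*(t)}$ and Proposition~\ref{th-unique}(a) yields $W_{m+1}(x,k,t)\le\tilde G(x,k,t,\tilde f^{*(t)})$. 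Hence, for all $(x,k)\in(E\setminus C)\times\mathbb Z_{+}$,
\[
0\le\tilde G^{*}(x,k,t)-\tilde G(x,k,t,\tilde f^{*(t)})\le\tilde G^{*}(x,k,t)-W_{m+1}(x,k,t)\le\tilde G^{*}(x,k,t)-W_{m}(x,k,t)\le\rho<\epsilon,
\]
that is, $\tilde f^{*(t)}$ is $\epsilon$-optimal for horizon $t$.

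The work concentrates in part (a): turning \eqref{As-Q} into the stochastic-domination bound $P^{\tilde f^{*}}_{(x,k)}(\tilde\sigma_{n}\le t)\le F_{\delta}^{*(n)}(t)$ (handled exactly as in Proposition~\ref{th-unique}), and the floor-function bookkeeping needed to match the abstract decay rate to the explicit threshold $\tilde l=\tilde K+\log_{\beta}\rho$. Part (b) is then short, the only real idea being that $W_{m+1}$ is a sub-solution for the greedy policy, so Proposition~\ref{th-unique}(a) applies directly.
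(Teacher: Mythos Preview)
Your argument for part (a) is correct and takes a genuinely different route from the paper. The paper works along the special subsequence $\{n_l\}$ extracted in the proof of Theorem~\ref{thm-W}, on which a single policy $\hat f$ is simultaneously greedy, and iterates the one--step recursion $W^*-W_{n_{l+1}}\le\tilde{\mathcal L}^{\hat f}[W^*-W_{n_l}]$ to reach the convolution bound $F_\delta^{*(l)}$. You bypass the subsequence entirely: using the optimal $\tilde f^*$ and \eqref{eq4.13} you get a direct probabilistic tail bound $\tilde G^*-W_n\le P^{\tilde f^*}_{(x,k)}(\tilde\sigma_n\le t)\le F_\delta^{*(n)}(T)$ valid for \emph{every} $n$, which is both cleaner and slightly stronger. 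Your reading $n_{\tilde l}=\lceil\tilde l\rceil$ differs from the paper's (the $\tilde l$--th term of the subsequence), but either choice proves the stated inequality.

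Part (b), however, has a real gap. The inequality $W_{m+1}=\mathcal L^{\tilde f^{*(t)}}W_m\le\mathcal L^{\tilde f^{*(t)}}W_{m+1}$ that you derive holds only at the \emph{fixed} horizon $t$, because $\tilde f^{*(t)}(x,k)$ was chosen as the maximiser in $W_{m+1}(x,k,t)=\max_a\mathcal L^aW_m(x,k,t)$ for that particular $t$. At times $s<t$ one has merely $W_{m+1}(x,k,s)=\max_a\mathcal L^aW_m(x,k,s)\ge\mathcal L^{\tilde f^{*(t)}}W_m(x,k,s)$, and the sub--solution inequality can fail. But Proposition~\ref{th-unique}(a) is proved by iterating $J\le\tilde{\mathcal L}^{\tilde\psi}J$, and each application of $\tilde{\mathcal L}^{\tilde\psi}$ sends $(x,k,t)$ to integrals over $(y,k+1,t-u)$ with $t-u<t$; so the hypothesis must hold for all $(y,j,s)$ with $s\le t$, not just at $s=t$. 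Hence you cannot invoke Proposition~\ref{th-unique}(a) to conclude $W_{m+1}(x,k,t)\le\tilde G(x,k,t,\tilde f^{*(t)})$, and the chain of inequalities leading to $\epsilon$--optimality breaks at that step. The paper instead writes the one--step identity $\tilde G(\cdot,\cdot,t,\hat f)-W_{n_{\tilde l}+1}(\cdot,\cdot,t)=\tilde{\mathcal L}^{\hat f}[\tilde G(\cdot,\cdot,\cdot,\hat f)-W_{n_{\tilde l}}](\cdot,\cdot,t)$ and bounds the integrand pointwise for all $s\le t$ using part (a); you would need an argument of that type, controlling $\tilde G^*-\tilde G(\tilde f^{*(t)})$ (or $W_m-\tilde G(\tilde f^{*(t)})$) at \emph{all} times below $t$, rather than the global sub--solution principle.
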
	
\begin{proof}
By the proof of Theorem~\ref{thm-W}, there exists a policy $\hat{f} \in  \tilde{\Pi}_{sd}$ with $\hat{f}(x,k)=a^{*}_{x,k}$ for all $(x,k) \in (E \setminus C) \times \mathbb{Z}_+$ such that $
		\max\limits_{f \in \tilde{\Pi}_{sd}}\mathcal{L}^{f}W_{n_l}(x,k,t)=\mathcal{L}^{
		\hat{f}}W_{n_l}(x,k,t)$.
	Then,
\begin{eqnarray*}
&&W^*(x,k,t)-W_{n_{_{l+1}}}(x,k,t)
\leq W^*(x,k,t)-W_{n_{l}+1}(x,k,t)\\
&=& \mathcal{L}^{\hat{f}}W^*(x,k,t)-\max\limits_{f \in \tilde{\Pi}_{sd}}\mathcal{L}^{f}W_{n_l}(x,k,t)
= \mathcal{L}^{\hat{f}}[W^*(x,k,t)-W_{n_l}(x,k,t)].
\end{eqnarray*}
	Therefore, by the definition of $F_{\delta}(t)$ given in the proof of Proposition \ref{th-unique} and an induction argument, we have $W^*(x,k,t)-W_{n_l}(x,k,t) \leq F_{\delta}^{*(l)}(t)\ (l \geq 0)$. Hence, noting $F_{\delta}^{*(l)}(t)\leq (1-\epsilon_0^{\tilde{K}})^{\lfloor\frac{l}{\tilde{K}}\rfloor}$ yields that for all $(x,k) \in (E \setminus C) \times \mathbb{Z}_+$, $W^*(x,k,t)-W_{n_l}(x,k,t) \leq \beta^{\lfloor\frac{l}{\tilde{K}}\rfloor\tilde{K}}<\rho\ \ (l\geq \tilde{K}+\log_{\beta}\rho)$. Take $\tilde{l}=\tilde{K}+\log_{\beta}\rho$. Since $G^*(x,k,t)=W^*(x,k,t)$, we obtain that $G^*(x,k,t)-W_n(x,k,t)\leq \rho$ for $n\geq n_{\tilde{l}}$.

 We now consider (b). Take $\rho=\frac{\epsilon}{2}$ and $\tilde{l}:=\tilde{K}+\log_{\beta}\rho$. By the proof of (a) and the proof of Theorem~\ref{thm-W}, there exists $\hat{f} \in \tilde{\Pi}_{sd}$ such that $W_{n_{\tilde{l}}+1}(x,k,t)=\mathcal{L}^{\hat{f}}W_{n_{\tilde{l}}}(x,k,t)$ for all $(x,k) \in (E \setminus C) \times \mathbb{Z}_+$. From Proposition \ref{th-unique} and its proof, we know that
\begin{eqnarray*}
	\tilde{G}(x,k,t,\hat{f})-W_{n_{\tilde{l}}+1}(x,k,t)
=\tilde{\mathcal{L}}^{\hat{f}}[\tilde{G}(x,k,t,\hat{f})
	-W_{n_{\tilde{l}}}(x,k,t)]
<\beta^{\lfloor\frac{l}{\tilde{K}}\rfloor\tilde{K}}.
\end{eqnarray*}
Take $
\tilde{f}^{*(t)}=\hat{f}$. Therefore, $\tilde{G}(x,k,t,\tilde{f}^{*(t)})-W_{n_{\tilde{l}}+1}(x,k,t)
<\beta^{\lfloor\frac{l}{\tilde{K}}\rfloor\tilde{K}}$.
Hence, by (a), we finally get that
$|\tilde{G}^*(x,k,t)-\tilde{G}(x,k,t, \tilde{f}^{*(t)})|<\rho+\beta^
{\lfloor\frac{l}{\tilde{K}}\rfloor\tilde{K}}\leq\epsilon$,
which completes the proof.
\end{proof}

Based on Lemma \ref{lem3.1}, Theorem
 \ref{thm3.1}, Theorem \ref{OE-optimal}, Theorem \ref{thm-W} and Proposition~\ref{thm-convergence}, we obtain an algorithm through an improved value iterative-type to approach to the maximal reach-avoid probability $(G^*(x,T): x \in E\! \setminus\! C)$ and an $\epsilon$-optimal policy $\pi^*$. This algorithm only consider one value at every iteration. Precisely, take $\tilde{l}=\tilde{K}+\log_{\beta}\rho$ and find $n_{\tilde{l}}$ by $\mathcal{L}^{\hat{f}}W_{n_l}(x,k,T)=\max\limits_{a \in A(x,k)}\mathcal{L}^aW_{n_l}(x,k,T)\ (l\geq 1)$. Let
\begin{eqnarray*}
\begin{cases}
	&W_0(x,n_{\tilde{l}},T)=\max\limits_{a^{(0)} \in A(x,n_{\tilde{l}})}\tilde{Q}((C,n_{\tilde{l}}+1),T|(x,n_{\tilde{l}})
,a^{(0)}),\\
	&W_{\tilde{n}}(x,n_{\tilde{l}}-\tilde{n},T)
	=\max\limits_{a^{(\tilde{n})} \in A(x,n_{\tilde{l}}-\tilde{n})}\mathcal{L}^{a^{(\tilde{n})}}
W_{\tilde{n}-1}(x,n_{\tilde{l}}-\tilde{n},T),\quad \tilde{n} \geq 1,
	\end{cases}
\end{eqnarray*}
where $\beta:=(1-\epsilon_0^{\tilde{K}})^{\frac{1}{\tilde{K}}}$ and $\tilde{K}$ is given in the proof of Proposition~\ref{th-unique}. We find that for all $x \in E \setminus C$, when step $\tilde{n}=n_{\tilde{l}}$, we get $W_{n_{\tilde{l}}}(x,0,T)$, which is the approximate value of the maximal reach-avoid probability, i.e.,
\begin{eqnarray}\label{iteration}
	 W_0(x,n_{\tilde{l}},T) \Rightarrow  W_{n_{\tilde{l}}}(x,0,T)\approx\tilde{G}^*(x,0,T)=G^*(x,T).
\end{eqnarray}


\begin{alg}\label{al-finite} \rm{Assume that $t=T$. An improved value iteration algorithm for the $\epsilon$-optimal policy $\pi^*$ and the maximal reach-avoid probability $(G^*(x,T): x \in E\! \setminus\! C)$, is given as below.
	
(1)\ Take $\rho:=\frac{\epsilon}{2}$ and $\tilde{l}=\tilde{K}+\log_{\beta}\rho$. Find $n_{\tilde{l}}$ and $\tilde{f}^*\in \tilde{\Pi}_{sd}$ by $\mathcal{L}^{\tilde{f}^*}W_{n_l}(x,k,T)=\max\limits_{a \in A(x,k)}\mathcal{L}^aW_{n_l}(x,k,T)\ (1\leq l\leq \tilde{l})$. For all $x \in E \setminus C$, let
	\begin{eqnarray*}
		W_0(x,n_{\tilde{l}},T)=\max\limits_{a^{(0)} \in A(x,n_{\tilde{l}})}\tilde{Q}((C,n_{\tilde{l}}+1),T|(x,n_{\tilde{l}})
,a^{(0)})\}.
	\end{eqnarray*}
	
	(2)\ Let $\tilde{n}=1$, and obtain $(W_{\tilde{n}}(x,n_{\tilde{l}},T):\ x \in E \setminus C)$ by
	\begin{eqnarray}\label{W-n}
		W_{\tilde{n}}(x,n_{\tilde{l}}-\tilde{n},T)
		=\max\limits_{a^{(\tilde{n})} \in A(x,n_{\tilde{l}}-\tilde{n})}\mathcal{L}^{a^{(\tilde{n})}}
W_{\tilde{n}-1}(x,n_{\tilde{l}}-\tilde{n},T)
	\end{eqnarray}
	for all $x \in E \setminus C$.

(3)\ If $\tilde{n}=n_{\tilde{l}}$, then stop because $0<\tilde{G}^*(x,0,T)-W_{n_{\tilde{l}}}(x,0,T)<\rho$. Moreover, $(W_{n_{\tilde{l}}}(x,0,T): x\in E \setminus  C)$ is usually regarded as $(\tilde{G}^*(x,0,T): x \in E \setminus C)$, and $\tilde{\pi}^*:=\{\tilde{f}^*, \tilde{f}^*,\cdots\}$ satisfying that for all $x \in E \setminus C$,
\begin{eqnarray}\label{a-op}
	\tilde{f}^*(x,n_{\tilde{l}})=a^*\in \mathop{\arg\max}\limits_{a^{(n_{\tilde{l}})} \in A(x,n_{\tilde{l}})} \mathcal{L}^{a^{(n_{\tilde{l}})}}W_{n_{\tilde{l}}}(x,0,T),
\end{eqnarray}
is an $\epsilon$-optimal policy  of (\ref{newModel}).

(4)\ Set
$\pi^*:=\{\psi^*_n: \ n \geq 0\}$ such that for $n \geq 0$,
\begin{eqnarray}\label{psi*}
\psi^*_n(\cdot|x):=
	\begin{cases}
		\delta_{\tilde{f}^*(x,n)}(\cdot)\ & \ \text{if} \ x \notin B_{n}\\
		g_{n}(\cdot|x)\ & \ \text{if} \ x \in B_{n},
	\end{cases}
\end{eqnarray}
where $\{g_n(\cdot |x): n \geq 0\}$ is a sequence of probability measures on $A(x)$ for all $x\in E$. Hence, the maximal reach-avoid probability is
\begin{eqnarray*}
G^*(x,T):=\tilde{G}^*(x,0,T)\approx W_{n_{\tilde{l}}}(x,0,T)\ \ \text{for \ all \ }x \in E \! \setminus \! (B_0 \cup C),
\end{eqnarray*}
and $\pi^*=\{\psi^*_n: \ n \geq 0\}$ defined by (\ref{psi*}) is an $\epsilon$-optimal policy  of (\ref{Model}).
}
\end{alg}

\section{Plane flight example}\label{sec-5}
In the final section, we give an example to illustrate potential situations in which our model can be applied, and the following plane flight example is already analyzed in \cite{IAC-05}, which computed the maximal reachable set.

\begin{exm}\label{exm5.1}
\rm{ Continue with Example~\ref{exm2.1}. Below we give three different situations of obstacle sets:
	\begin{eqnarray}\label{B1}
		B^1_k:=\{0\},\ k \geq 0;
	\end{eqnarray}
	\begin{eqnarray}\label{B2}
		B^2_k:=\{1\},\  k \geq 0;
	\end{eqnarray}
	\begin{eqnarray}\label{B3}
		B^4_k:=\begin{cases}
			\{0\},\ \ k\ \text{is\ odd},\\
			\{1\},\ \ k\ \text{is\ even}.
		\end{cases}
	\end{eqnarray}
The corresponding transition kernel is defined as below: for all $i \in E\! \setminus\! C$,
\begin{eqnarray*}
\begin{cases}
Q(j,t|i,\alpha):=\begin{cases}
\frac{t}{\mu(i,\alpha)}p(j|i,\alpha),\ & \ 0 \leq t \leq \mu(i,\alpha),\\
p(j|i,\alpha),\ & \ t>\mu(j,\alpha);
\end{cases}\\
Q(j,t|i,\beta):=\begin{cases}
\frac{t}{\mu(i,\beta)}p(j|i,\beta),\ & \ 0 \leq t \leq \mu(i,\beta),\\
p(j|i,\beta),\ & \ t>\mu(i,\beta);
\end{cases}\\
Q(j,t| i, \gamma):=(1-e^{-\mu(i,\gamma)t})p(j|i,\gamma),
\end{cases}
\end{eqnarray*}
where $p(j|i,a)$ for all $a \in A(i)$ is given by Table 1.
Therefore, under the above transition kernel, our purpose is
computing the maximal reach-avoid probability of the vehicle to target $C$ within finite time $T$, i.e., $G^*(i,T):=\sup_{\psi \in \Pi_{rm}}G(i,T,\psi)$, and
finding the optimal policy $\psi^* \in \Pi_{rm}$ such that $G(i,T,\psi^*)=G^*(i,T)$ for all $i \in E \setminus (B^s_0 \cup C)\ (s=1,2,3)$. Therefore, under the above transition kernel, our purpose is
computing the maximal reach-avoid probability of the vehicle to target $C$ within finite time $T$, i.e., $G^*(i,T):=\sup_{\psi \in \Pi_{rm}}G(i,T,\psi)$, and
finding the optimal policy $\psi^* \in \Pi_{rm}$ such that $G(i,T,\psi^*)=G^*(i,T)$ for all $i \in E \setminus (B^s_0 \cup C)\ (s=1,2,3)$.

From the description above, we obtain the process with semi-Markov kernel given above. By Theorem \ref{thm3.1}, it is natural to consider the equivalent model (\ref{newModel}), where in two situations (\ref{B1})-(\ref{B3}), the new state space is $S:=E\times \mathbb{Z}_+$, the new obstacle sets are  $\tilde{B}^s:=\cup_{k=0}^{\infty}B^s_k \times \{k\}$ with $s=1,2,3$, respectively, the new target set is $\tilde{C}:=C \times \mathbb{Z}_+$, the new action space is composed of $A(i,k):=\{\alpha, \beta, \gamma\}$ for all $(i,k) \in S \setminus (\tilde{B}^s\cup \tilde{C})\ (s=1,2,3)$, and $A(i,k):=\{\Delta^*\}$ for all $(i,k) \in \tilde{B}^s\ (s=1,2,3)$, where there is no transition from state $(i,k) \in \tilde{B}^s\ (s=1,2,3)$ under action $\Delta^*$, and the new transition kernel is given as below: for all $(i,k) \in S\setminus (\tilde{B}^s\cup \tilde{C})\ (s=1,2,3)$,
\begin{eqnarray*}
\begin{cases}
\tilde{Q}((j,k+1),t|(i,k),\alpha):=\begin{cases}
\frac{t}{\mu(i,\alpha)}p(j|i,\alpha),\ & \ 0 \leq t \leq \mu(i,\alpha),\\
p(j|i,\alpha),\ & \ t>\mu(i,\alpha);
\end{cases}\\
\tilde{Q}((j,k+1),t|(i,k),\beta):=\begin{cases}
\frac{t}{\mu(i,\beta)}p(j|i,\beta),\ & \ 0 \leq t \leq \mu(i,\beta),\\
p(j|i,\beta),\ & \ t>\mu(i,\beta);
\end{cases}\\
				\tilde{Q}((j,k+1),t| (i,k), \gamma):=(1-e^{-\mu(i,\gamma)t})p(j|i,\gamma).
\end{cases}
\end{eqnarray*}
Then, from Theorem \ref{thm3.1}, we only need to calculate $\tilde{G}^*(x,0,T):=\sup_{\tilde{\psi} \in \tilde{\Pi}_s}\tilde{G}(x,0,T,\tilde{\psi})$ and find the equivalent optimal policy $\tilde{\psi}^* \in \tilde{\Pi}_s$. To take numerical calculation for this example, we assume that the states are simplified as $0,1,2,3,4$, which denote five different longitudinal axis positions of the vehicle. Moreover, we assume that $T=18$, $\tilde{B}^1:=\{(0,k):k\geq 0\}$, $\tilde{B}^2:= \{(1,k):k\geq 0\}$, $\tilde{B}^3:=\{(0,k):k\  \text{is odd}\} \cup \{(1,k):k  \ \text{is  even}\}$ and $\tilde{C}:=\{4\} \times \mathbb{Z}_+$. The data of the model is given by Table 1.

\begin{table}[!htbp]
\centering
\caption{The data of the model}
\begin{tabular}{|c|c|c|c|c|c|c|c|}
\hline
state $i$&action $a$&$\mu(i,a)$&$p(0|i,a)$&$p(1|i,a)$&$p(2|i,a)$&$p(3|i,a)$&$
				p(4|i,a)$\\
\hline
\multirow{3}*{0}&$\alpha$&20&0&0.2&0.3&0.2&0.3\\
\cline{2-8}
\multirow{3}*{}&$\beta$&19&0&0.3&0.1&0.2&0.4\\
\cline{2-8}
\multirow{3}*{}&$\gamma$&21&0&0.3&0.2&0.2&0.3\\
\hline
\multirow{3}*{1}&$\alpha$&20&0.2&0&0.3&0.1&0.3\\
\cline{2-8}
\multirow{3}*{}&$\beta$&19&0.2&0&0.3&0.1&0.4\\
\cline{2-8}
\multirow{3}*{}&$\gamma$&21&0.3&0&0.3&0.1&0.3\\
\hline
\multirow{3}*{2}&$\alpha$&22&0.05&0.4&0&0.25&0.3\\
\cline{2-8}
\multirow{3}*{}&$\beta$&20&0.05&0.3&0&0.3&0.35\\
\cline{2-8}
\multirow{3}*{}&$\gamma$&19&0.1&0.2&0&0.4&0.3\\
\hline
\multirow{3}*{3}&$\alpha$&19&0.05&0.35&0.2&0&0.4\\
\cline{2-8}
\multirow{3}*{}&$\beta$&18&0.05&0.35&0.3&0&0.3\\
\cline{2-8}
\multirow{3}*{}&$\gamma$&22&0.05&0.3&0.3&0&0.35\\
\hline
\multirow{3}*{4}&$\alpha$&22&0.3&0.2&0.2&0.3&0\\
\cline{2-8}
\multirow{3}*{}&$\beta$&20&0.2&0.3&0.3&0.2&0\\
\cline{2-8}
\multirow{3}*{}&$\gamma$&19&0.4&0.1&0.1&0.4&0\\
\hline
\end{tabular}
\end{table}

\begin{pro}
\rm{Under the above assumption, the explicit maximal reach-avoid probability of original model (\ref{Model}) and the specific $\epsilon$-optimal policy are obtained, where the $\epsilon$-optimal policy is indeed affected by horizons.
}
\end{pro}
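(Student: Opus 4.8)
The plan is to apply Algorithm \ref{al-finite} directly to the concrete data specified above, and to verify that each of its four steps can be carried out explicitly for this finite model. First I would check that the sufficient condition \eqref{As-2eq} of Proposition \ref{As-2} (equivalently \eqref{As-Q} in the two-dimensional model) holds here: for each state $i\in E\setminus B_0^s$ and each action $a\in A(i)$, the sojourn-time distribution given by Table 1 satisfies $Q(E,\delta\mid i,a)\le 1-\epsilon_0$ for a suitable $\delta>0$ and $\epsilon_0>0$ (indeed the three forms of $Q(\cdot,t\mid i,a)$ are all strictly below their limits for small $t$, so one may take, e.g., $\delta$ small and $\epsilon_0$ the resulting uniform gap). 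This legitimizes Theorem \ref{thm-W} and Proposition \ref{thm-convergence}, hence the convergence of the iteration \eqref{iteration}.

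Next I would instantiate the equivalent model \eqref{newModel} for each of the three obstacle regimes \eqref{B1}, \eqref{B2}, \eqref{B3}: the state space is $S=\{0,1,2,3,4\}\times\mathbb{Z}_+$, the target is $\tilde C=\{4\}\times\mathbb{Z}_+$, and $\tilde B^s$ is as displayed; the transition kernel $\tilde Q$ is the one written out above, obtained from $Q$ by appending the index $k\mapsto k+1$. With $T=18$ fixed, I would carry out Steps 1--3: choose $\rho=\epsilon/2$, determine $\tilde l=\tilde K+\log_\beta\rho$ and the corresponding iteration depth $n_{\tilde l}$, initialize $W_0(x,n_{\tilde l},T)=\max_{a}\tilde Q((\{4\},n_{\tilde l}+1),T\mid (x,n_{\tilde l}),a)$, and then run the backward recursion \eqref{W-n}, at each stage recording a maximizing action to build $\tilde f^*$. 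Because $E$ is finite and each $A(i)=\{\alpha,\beta,\gamma\}$ is finite, every maximization is over three candidates and every integral $\int_0^T\!\!\int_{E\setminus(B_{k+1}^s\cup C)}$ is a finite sum of elementary one-dimensional integrals of the explicit (piecewise-linear or exponential) kernels, so each $W_{\tilde n}(\cdot,\cdot,T)$ is computable in closed form (or to any desired numerical precision). Taking $\tilde n=n_{\tilde l}$ yields $W_{n_{\tilde l}}(x,0,T)\approx\tilde G^*(x,0,T)$ and, via Proposition \ref{thm-convergence}(b), the $\epsilon$-optimal $\tilde f^{*(T)}\in\tilde\Pi_{sd}$. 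Finally, Step 4 together with Theorem \ref{thm3.1}(ii) and Lemma \ref{lem3.1}(ii) transfers this back: $G^*(i,T)=\tilde G^*(i,0,T)$ for $i\in E\setminus(B_0^s\cup C)$, and the policy $\pi^*=\{\psi_n^*\}$ defined by \eqref{psi*} is $\epsilon$-optimal for the original model \eqref{Model}. This establishes the first two assertions of the proposition.

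For the last assertion — that the $\epsilon$-optimal policy genuinely depends on the horizon — I would exhibit it by comparison across different values of $t$. Concretely, one runs Steps 1--3 for two (or more) horizons, say a short $t$ and the full $t=T=18$, and reads off the maximizing actions $\tilde f^{*(t)}(x,\cdot)$ in each case; since the backward recursion \eqref{W-n} weights the immediate reach-probability $\tilde Q((\{4\},k+1),t\mid\cdot)$ against the continuation value, and the three kernels in Table 1 have different sojourn scales $\mu(i,a)$ and different next-state laws $p(j\mid i,a)$, the action that maximizes the one-step reach probability for small $t$ need not be the one that maximizes the multi-step value for large $t$. I expect the main obstacle to be purely computational bookkeeping: faithfully evaluating the nested integrals of the piecewise kernels over the truncated state set $E\setminus(B_{k+1}^s\cup C)$ at each iteration, for each of the three obstacle regimes, and keeping track of which state-index pairs $(i,k)$ are "blocked" (in $\tilde B^s$) versus "free". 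The monotonicity facts in Theorem \ref{thm useful} help organize the $k$-dependence — regime \eqref{B1} and \eqref{B2} have constant $B_k$, so $\tilde G^*(x,k,T)$ is $k$-independent there, while regime \eqref{B3} alternates — which reduces the amount of genuinely distinct computation. No conceptual difficulty remains once Steps 1--3 of Algorithm \ref{al-finite} are justified, which they are by the results already proved; the proposition is therefore a worked instance of that algorithm.
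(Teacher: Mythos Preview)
Your proposal is correct and follows essentially the same route as the paper: verify the sufficient condition \eqref{As-2eq} of Proposition~\ref{As-2} for the concrete kernels, then invoke Algorithm~\ref{al-finite} (backed by Theorem~\ref{thm-W}, Proposition~\ref{thm-convergence}, Theorem~\ref{thm3.1} and Lemma~\ref{lem3.1}) to produce the approximate optimal value and an $\epsilon$-optimal policy. The paper's own proof is terser and simply records the specific constants---$\delta=1$, $\epsilon_0=\tfrac{17}{18}$, $\epsilon=1.02\times10^{-5}$, $\tilde K=6$, $n_{\tilde l}=8$---whereas you leave these abstract; supplying them would tighten your argument, but the structure is the same.
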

\begin{proof}
Indeed, Assumption \ref{As-2} holds with $\delta=1$ and $\epsilon_0=\frac{17}{18}$ by verifying Proposition \ref{As-2}. Choose $\epsilon=1.02\times 10^{-5}$ and we get $\tilde{K}=6$ and $\rho=5.1 \times 10^{-6}$. Thus, $n_{\tilde{l}}=8$. By Lemma \ref{lem3.1} and Theorem \ref{OE-optimal}, the existence of the $\epsilon$-optimal policy is ensured.
\end{proof}
Now we calculate the approximate value of $\tilde{G}^*(i,0,18)$ for $i=0,1,2,3,4$ by MATLAB software, that is, $W^1_8(i,0,18)$, $W^2_8(i,0,18)$ and $W^3_8(i,0,18)$ for $i=0,1,2,3,4$ in situations (\ref{B1})-(\ref{B3}), where the approximation calculation in step 2 of the integrals is from the numerical integration method.
Hence, by step 4, we obtain that the maximal reach-avoid probability $(G^*(i,18):\ i \in \{0,1,2,3\})$ in situations (\ref{B1})-(\ref{B3}) are approximately given as below, respectively:
\begin{eqnarray*}
\begin{cases}
	W^1_8(0,0,18)=0\\
	W^1_8(1,0,18)\approx0.648729\\
	W^1_8(2,0,18)\approx0.734136\\
	W^1_8(3,0,18)\approx0.752358.
\end{cases}
\quad\quad
	\begin{cases}
	W^2_8(0,0,18)\approx0.523974\\
	W^2_8(1,0,18)=0\\
	W^2_8(2,0,18)\approx0.556912\\
	W^2_8(3,0,18)\approx0.506937,
\end{cases}
\quad\quad
	\begin{cases}
	W^3_8(0,0,18)=0.638189\\
	W^3_8(1,0,18)=0,\\
	W^3_8(2,0,18)\approx0.66149\\
	W^3_8(3,0,18)\approx0.661607.
\end{cases}
\end{eqnarray*}		
and the $\epsilon$-optimal policy in all three situations is $\pi^*:=\{\psi^*_n: \ n \geq 0\}$ satisfying that for $n \geq 0$,
\begin{eqnarray*}
	\psi^*_n(\cdot|x):=
	\begin{cases}
		\delta_{\beta}(\cdot)\ & \ \text{if} \ x \notin B_{n}\\
		g_{n}(\cdot|x)\ & \ \text{if} \ x \in B_{n}.
	\end{cases}
\end{eqnarray*}
We give the situation of $W^1_8(i,0,t)$, $W^2_8(i,0,t)$ and $W^3_8(i,0,t)$ for all $i \in \{0,1,2,3\}$  with respect to $t \in [0,18]$ in Figure 1, Figure 2 and Figure 3, respectively.

\begin{center}
\includegraphics[height=0.36\textwidth]{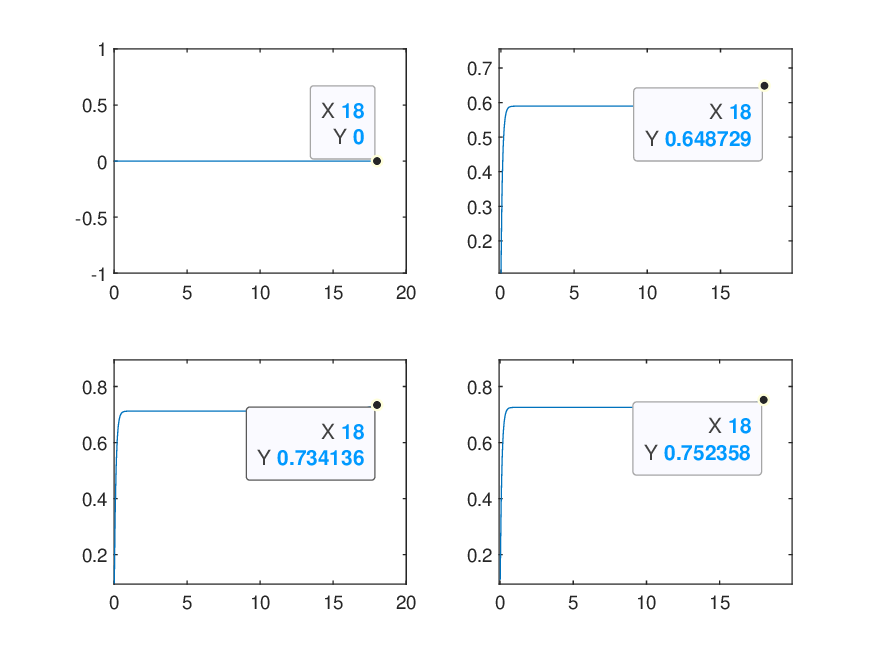}\quad \includegraphics[height=0.36\textwidth]{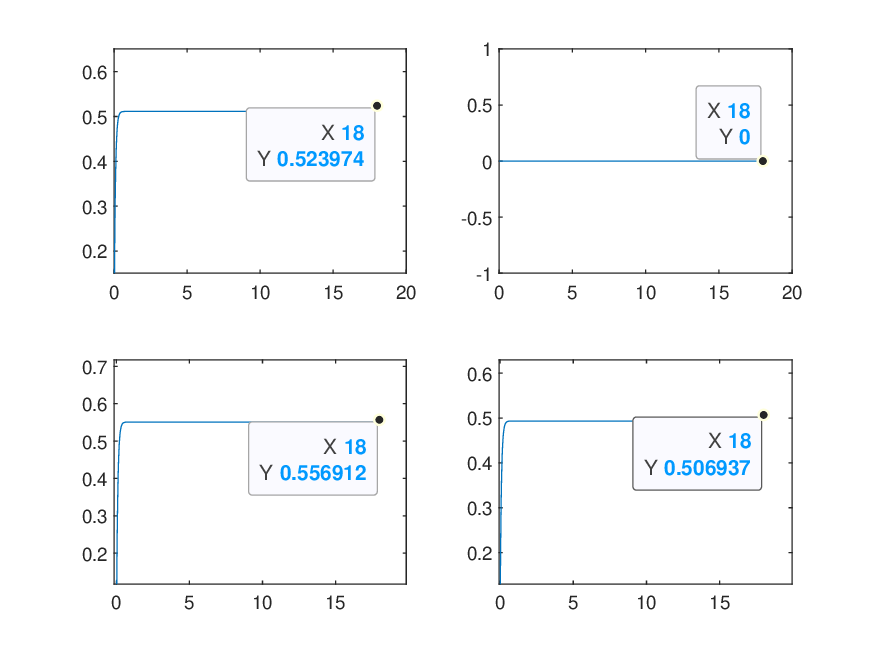}
{\tiny\text{Fig 1:\ The values of $W^1_8(i,0,t)$ with respect to $t \in [0,18]$.\quad \quad \quad \quad \quad \quad Fig 2:\ The values of $W^2_8(i,0,t)$ with respect to $t \in [0,18]$.}}
\end{center}
\begin{center}
	 \includegraphics[height=0.36\textwidth]{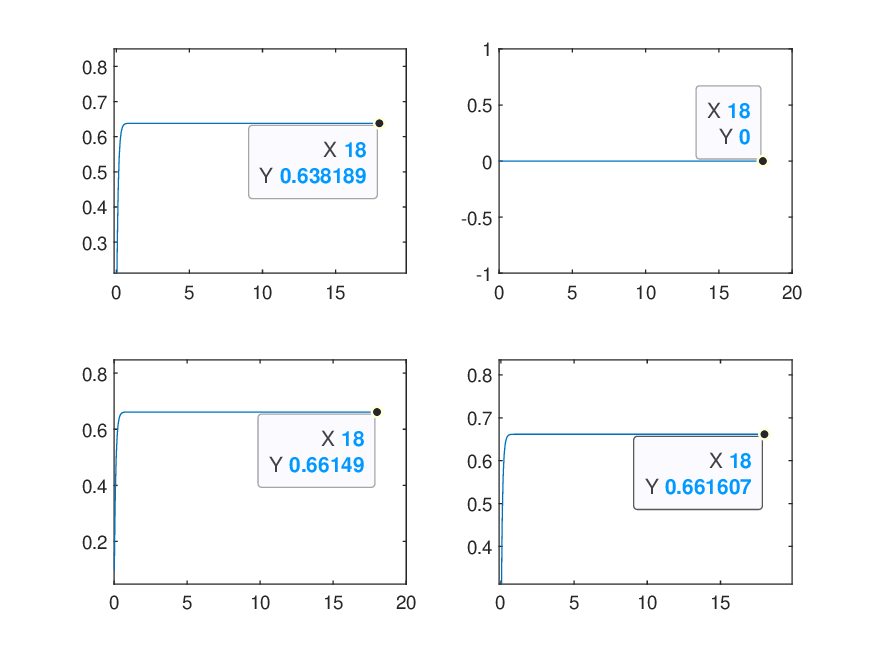}
	{\text{ Fig 3:\ The values of $W^3_8(i,0,t)$ with respect to $t \in [0,18]$.}}
\end{center}

\begin{rem}\label{rem-3}
	{\rm	By Figures 1-2, we see that in the fixed obstacle set case, when the transition probability from regular states (that is, states in $E \setminus (B_0 \cup C)$) to the obstacle set is smaller, the maximal reach-avoid probability bigger. However, based on situation (\ref{B2}), we change the obstacle state $1$ to $0$ at decision epochs $3,4,5,6$ and obtain situation (\ref{B3}) (i.e., varying obstacle set case), it can be seen that $W^3_8(i,0,18)>W^2_8(i,0,18)\ (i\neq 1)$, see Figure 3. Therefore, based on the second situation, in order to enlarge the maximal reaching probability, we only need to suitably change the obstacle set at finite decision epochs (since $n_{\tilde{l}}=8$).}
\end{rem}

}

\end{exm}

\end{document}